\documentclass[a4paper,11pt]{amsart}
\usepackage[utf8]{inputenc}
\usepackage[english]{babel}
\usepackage{fancyhdr}
\usepackage{amsmath}
\usepackage{amsfonts,amstext}
\usepackage{amssymb,amsthm,amscd,amsxtra,wasysym,graphicx}
\usepackage{mathrsfs,esint,comment}
\usepackage{tikz-cd}
\usepackage{enumitem,mathtools,dsfont}
 \usepackage{palatino,mathpazo}
 \usepackage{palatino}
\usepackage{charter}
\usepackage{xcolor}

\def\Bibtex{{\rm B\kern-.05em{\sc i\kern-.025em b}\kern-0.08em T\kern-.1667em\lower.7ex\hbox{E}\kern-.125emX}}
\hfuzz1pc
 \usepackage[breaklinks=true,bookmarks=false,pagebackref]{hyperref}
 \usepackage{hyperref}
\hypersetup{
     unicode=false,          
    pdftoolbar=true,       
    pdfmenubar=true,       
    pdffitwindow=false,     
    pdfstartview={FitH}, 
    pdftitle={Modulus of continuity},   
    pdfauthor={Quang-Tuan Dang},   
    colorlinks=true,  
   linkcolor=purple,         
    citecolor=blue,        
    filecolor=green,      
    urlcolor=blue}

\textwidth=360pt
 \textheight=615pt
 \parindent=8mm
 \evensidemargin=0pt
 \oddsidemargin=0pt
\frenchspacing
 \usepackage{fullpage}

\usepackage{colonequals}

\usepackage{enumitem}

\numberwithin{equation}{section}




\newtheorem{theorem}{Theorem}[section]
\newtheorem{proposition}[theorem]{Proposition}
\newtheorem{corollary}[theorem]{Corollary}
\newtheorem{lemma}[theorem]{Lemma}
\theoremstyle{definition}
\newtheorem{definition}[theorem]{Definition}
\newtheorem{remark}[theorem]{Remark}

\newtheorem{example}[theorem]{Example}

\newcommand{\Vol}{{\rm Vol}}

 \newcommand{\dist}{\mathop{\mathrm{dist}}\nolimits}

\newcommand{\ddc}{dd^c}

\newcommand{\PSH}{{\rm PSH}}

\newcommand{\capacity}{\mathop{\mathrm{Cap}}\nolimits}

\newcommand{\R}{\mathbb{R}}



\setcounter{tocdepth}{1}

\begin{document}
		\title[Modulus of continuity]{Modulus of continuity of Monge--Amp\`ere potentials in big cohomology classes}
	\author{Quang-Tuan Dang, Hoang-Son Do, Hoang Hiep Pham}

			\address{Yau Mathematical Sciences Center, Tsinghua University, Beijing 100084, China}
		\email{dangqt@mail.tsinghua.edu.cn $\&$ dangquangtuan10@gmail.com}
  \address{Vietnam Academy of Science and Technology, Institute of Mathematics, 18 Hoang Quoc Viet road, Cau Giay, Hanoi, Vietnam}
  \email{dhson@math.ac.vn}
  \address{Institute for Artificial Intelligence, University of Engineering and
Technology, Vietnam National University, Hanoi, Vietnam}
 \email{phhiepvn@gmail.com}
	\date{\today}
	\keywords{Complex Monge-Amp\`ere equations; H\"older measure; Capacities}
	\subjclass[2020]{32U15, 32W20, 32Q15}

\begin{abstract} In this paper, we prove a uniform estimate for the modulus of continuity of solutions to the degenerate complex Monge--Amp\`ere equation in big cohomology classes. This improves the previous work of Di Nezza--Lu and the first author.
\end{abstract}

\maketitle

\section{Introduction}

The study of complex Monge-Amp\`ere equations on compact K\"ahler manifolds has attracted considerable interest since Yau’s resolution of the Calabi conjecture. In connection with the Minimal Model Program, the search for singular K\"ahler--Einstein metrics
leads to the study of degenerate complex Monge–Amp\`ere equations; see~\cite{eyssidieux2009singular,berman2019kahler,guedj2017degenerate} and references therein.

Guedj and Zeriahi~\cite{guedj2007weighted} developed the first step of the study of the non-pluripolar Monge--Amp\`ere measure and solved degenerate complex Monge--Amp\`ere equations with rather general measures on the right-hand side. Their approach was later extended to the setting of big cohomology classes by Boucksom, Eyssidieux, Guedj, and Zeriahi~\cite{boucksom2010monge}. Since then, when the right-hand side belongs to $L^p$ for some $p>1$, the H\"older continuity of solutions to degenerate complex Monge--Amp\`ere equations has been intensively studied by many authors ~\cite{kolodziej1998complex,kolodziej2008holder,
hiep2010holder,
eyssidieux2011viscosity,boucksom2010monge,demailly2014holder, dinh2014characterization, 
dang2021continuity,dinh2022complex}. 
The modulus of continuity of the solution plays a crucial role since it is closely related to the geometric properties of the corresponding K\"ahler metrics, such as the uniform bounds for diameter of metrics and Gromov--Hausdorff convergence; see~\cite{Fu-Guo-Song2020-geometric,guo2022-local,guo2021modulus,guo2022-diameter, Guo2024-diameter2,guedj2025-diameter,vu2024continuity,do2023log,Guedj-To-25-green,vu24-diameter,nguyen-vu24-diameter}.

The primary goal of this paper is to study the modulus of continuity of solutions to complex Monge--Ampère equations when the right-hand side is not integrable.
To state our result, we first introduce some notation and terminology. Let $X$ be a compact K\"ahler manifold of dimension $n$ equipped with a K\"ahler form $\omega_X$. We let $d$, $d^c$ denote the real differential operators on $X$ defined by $d:=\partial+\Bar{\partial}$, $d^c:=\frac{i}{2}(\Bar{\partial}-\partial)$ so that $\ddc=i\partial\Bar{\partial}$.
Fix a closed real smooth (1,1)-form $\theta$ representing a big cohomology class. Let $\PSH(X,\theta)$ denote the set of all $\theta$-psh functions. 
We say that the cohomology class $\{\theta\}$ is {\em big} if the set $\PSH(X,\theta-\varepsilon\omega_X)\neq \varnothing$ for some $\varepsilon>0$.
Let $\textrm{Amp}(\theta)$ denote the {\em ample locus} of $\theta$ which is roughly speaking the largest Zariski open subset where $\{\theta\}$ locally behaves like
a K\"ahler class. 

We are interested in studying the regularity of solutions to the complex Monge--Amp\`ere equation type $u\in\PSH(X,\theta)$ satisfying \begin{equation}\label{eq: dcmae}
   (\theta+\ddc u)^n=\nu,\quad\sup_X u=0, 
\end{equation}
where
$\nu$ is a positive measure on $X$ that puts no mass on pluripolar subsets and satisfies the compatibility condition $\nu(X)=\Vol(\theta)$, $u$ is the unknown $\theta$-function, and the left-hand side of~\eqref{eq: dcmae} denotes the non-pluripolar Monge--Amp\`ere product~\cite{bedford1987fine,boucksom2010monge}. 

Let $\mathcal{C}$ denote the set of $\omega_X$-psh functions $u$ normalized by $\int_X u\omega_X^n=0$. This is a
convex compact set in the $L^1(\omega_X^n)$ topology. Define the following distance on $\mathcal{C}$ 
\[\dist(u,v)\coloneqq \|u-v\|_{L^1}, \]
where the $L^1$-norm is with respect to the measure $\omega_X^n$. 
We say that a measure $\mu$ is {\em H\"older continuous}
	 with H\"older constant $B$ and H\"older exponent $0<\beta\leq 1$
	 with respect to $\dist_{L^1}$ on $\mathcal{C}$ if for any $u,v\in\mathcal{C}$
     \[\int_X|u-v|\omega_X^n\leq B\|u-v\|_{L^1}^\beta. \]
Our main result is the following:
\begin{theorem}\label{main} Let $(X,\omega_X)$ be a compact K\"ahler manifold of dimension $n$ and $\theta$ a smooth closed (1,1) form whose cohomology class is big.
	 Let $\mu$ be a H\"older continuous measure 
	 with H\"older constant $B$ and H\"older exponent $0<\beta\leq 1$
	 with respect to $\dist_{L^1}$ on $\mathcal{C}$.
	  Assume that $\psi$ is a quasi-psh function
	 on $X$ satisfying $\int_Xe^{-\psi}d\mu=\Vol(\theta)$ and $\omega_X+a_0dd^c\psi\geq 0$,
	 with $a_0>0$. Suppose that  
	 $u\in\mathcal{E} (X, \theta)$ is a solution to the following equation \begin{equation} (\theta+\ddc u)^n=e^{-\psi}\mu,\quad\sup_X u=0.\end{equation}
	  Then for each $U\Subset{\rm Amp}(\theta)\backslash\{\psi=-\infty\}$, there exists a continuous function $F_U:\R_+\rightarrow\R_+$
	  with $F(0)=0$ such that
	  $$|u(z_1)-u(z_2)|\leq F_U(\dist (z_1, z_2)),$$
	  for every $z_1, z_2\in U$. Moreover, the choice of $F_U$ depends only on  $X$, $U$, $\omega_X$, $n$, $\theta$, $a_0$, $B$, $\beta$, $\sup_U(-\psi)$ and
	  an upper bound function for  $H(a)=\int_Xe^{{2(V_{\theta}-u)}/{a}}d\mu$.
\end{theorem}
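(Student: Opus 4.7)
The plan is to combine a regularization-plus-comparison argument with a volume--capacity estimate tailored to the non-$L^p$ density $e^{-\psi}\mu$. Concretely, I construct a family $(u_\epsilon)_{\epsilon>0}$ of continuous $(\theta+A_0\epsilon\omega_X)$-psh approximations of $u$ from above with an explicit modulus of continuity $\omega_{U,\epsilon}$ on $U$, bound the defect $\sup_X(u_\epsilon-u)$ by a modulus $\kappa(\epsilon)$, and finally optimize over $\epsilon$. For the construction of $u_\epsilon$, Demailly's regularization adapted to big cohomology classes (see \cite{boucksom2010monge}) yields a decreasing family $u_\epsilon\downarrow u$ of $(\theta+A_0\epsilon\omega_X)$-psh continuous functions, with quantitative H\"older regularity on any $U\Subset\textrm{Amp}(\theta)$.

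\textbf{Key technical step: volume--capacity estimate.} The crux is a volume--capacity estimate for $e^{-\psi}\mu$. For any Borel $E\subset U$ and some $q>1$, H\"older's inequality gives
\[\int_E e^{-\psi}d\mu\leq \Bigl(\int_X e^{-q\psi}d\mu\Bigr)^{1/q}\mu(E)^{1-1/q}.\]
For the first factor, the identity $e^{-\psi}d\mu=(\theta+\ddc u)^n$ combined with the quasi-psh control $\omega_X+a_0\ddc\psi\geq 0$ (which furnishes Skoda-type integrability) and the bound on $H(a)$ yields $\int_X e^{-q\psi}d\mu<\infty$ for some $q>1$, with explicit bound depending only on the listed parameters. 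For the second factor, the H\"older continuity of $\mu$ on $\mathcal{C}$ applied to the relative extremal function $u_E^*\in\PSH(X,\omega_X)$ of $E$ (suitably normalized to lie in $\mathcal{C}$, using $-u_E^*\geq\mathbf{1}_E$ and $\|u_E^*\|_{L^1}\leq C\capacity_\theta(E)^\gamma$ for some $\gamma>0$) gives $\mu(E)\leq C'\capacity_\theta(E)^{\gamma\beta}$. Combining the two factors produces an estimate of the form $(e^{-\psi}\mu)(E)\leq \Phi(\capacity_\theta(E))$ with $\Phi(s)\to 0$ as $s\to 0^+$ and $\Phi$ depending only on the listed data.

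\textbf{Closing the argument.} The volume--capacity estimate feeds into the Ko{\l}odziej-type comparison/iteration scheme for degenerate Monge--Amp\`ere equations in big classes (following Di Nezza--Lu and \cite{boucksom2010monge}) to produce $\sup_X(u_\epsilon-u)\leq\kappa(\epsilon)$ with $\kappa(0^+)=0$. Then for $z_1,z_2\in U$ the triangle inequality
\[|u(z_1)-u(z_2)|\leq 2\kappa(\epsilon)+\omega_{U,\epsilon}(\dist(z_1,z_2))\]
yields the desired continuous function $F_U$ after choosing $\epsilon=\epsilon(\dist(z_1,z_2))$ optimally. The principal obstacle is the volume--capacity estimate itself: because $e^{-\psi}\mu\notin L^p(X,\omega_X^n)$ for any $p>1$ in general, classical Ko{\l}odziej $L^p$-capacity methods do not apply, and the H\"older continuity of $\mu$ on $\mathcal{C}$ combined with the Trudinger-type integrability furnished by $H(a)$ must serve as their replacement, with careful tracking of all constant dependencies in terms of $X$, $U$, $\omega_X$, $n$, $\theta$, $a_0$, $B$, $\beta$, $\sup_U(-\psi)$ and the bound on $H(a)$.
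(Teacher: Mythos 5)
Your overall architecture (regularize $u$, bound the defect $\sup(u_\epsilon-u)$ via a volume--capacity estimate fed into a De Giorgi/Ko{\l}odziej iteration, then optimize in $\epsilon$) is the same as the paper's, and your identification of the volume--capacity estimate for $e^{-\psi}\mu$ as the crux is correct. But that key step, as you set it up, has a genuine gap: you apply H\"older's inequality with an exponent $q>1$ to the factor $e^{-\psi}$ and claim $\int_X e^{-q\psi}d\mu<\infty$ ``with explicit bound depending only on the listed parameters.'' This does not follow from the hypotheses. The only integrability assumed on $\psi$ is $\int_X e^{-\psi}d\mu=\Vol(\theta)$, and quasi-psh functions saturate such conditions: already for $\mu=\omega_X^n$ one can take $\psi\approx 2n\log|z|+2\log(-\log|z|)$ near a point, which is quasi-psh with $e^{-\psi}\in L^1$ but $e^{-q\psi}\notin L^1$ for every $q>1$. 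Skoda's theorem does not rescue this: it gives $\int_X e^{-c\psi}\omega_X^n<\infty$ only for $c$ below $2/\nu_{\max}(\psi)$, which for fixed $a_0$ need not exceed $1$, and transferring to $\mu$ via H\"older continuity (Proposition~\ref{prop exp integrable}) shrinks the exponent further by a factor $\gamma<\beta\le 1$. Invoking the bound on $H(a)$ here is circular: a global inequality $-\psi\lesssim (V_\theta-u)/a$ is a \emph{consequence} of the a priori estimate you are trying to prove, not an input.

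The paper's way around this is the step your proposal is missing. One never integrates a power $q>1$ of $e^{-\psi}$ globally. Instead, the comparison function is built so that it already contains $\psi$: one takes $v=v_{a,\delta}\le V_\theta+a\lambda\psi$ (with $\lambda=B_0\varepsilon_0$ in the notation of the proof), so that on the sublevel set $\{u<v-s\}$ one has the pointwise bound $-\psi\le (V_\theta-v)/(a\lambda)\le (V_\theta-u-s)/(a\lambda)$. Hence $\int_{\{u<v-s\}}e^{-\psi}d\mu\le\int_{\{u<v-s\}}e^{(V_\theta-u-s)/(a\lambda)}d\mu$, and Cauchy--Schwarz splits this into $H(a\lambda/2)^{1/2}\cdot\mu(\{u<v-s\})^{1/2}$, where $H(a)=\int_Xe^{2(V_\theta-u)/a}d\mu$ is \emph{always} finite for $u\in\mathcal{E}(X,\theta)$ and $\mu$ H\"older continuous (Lemma~\ref{lem integ phi-u}) --- no hypothesis on $\psi$ beyond quasi-psh-ness is needed. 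The second factor is then controlled by the exponential capacity decay of Proposition~\ref{the exp cap}, yielding $\int_{\{u<v-s\}}\theta_u^n\le C\,e^{-s/(a\lambda)}[\capacity_\theta(u<v-s)]^2$, which is exactly the shape needed for Lemmas~\ref{lem tg(t+s)}--\ref{lem extend GZ12 prop 5.3}. In short: the price of the missing integrability of $e^{-q\psi}$ is paid by making the approximant itself $\psi$-dependent and restricting the exponential bound on $-\psi$ to the relevant sublevel sets; without this device the iteration cannot be started. (A secondary, more minor point: your smooth approximants must come from the Kiselman--Legendre transform of $\rho_t u$ rather than from $\rho_\delta u$ directly, since $\rho_\delta u$ alone is not $(\theta+O(\delta)\omega_X)$-psh; this is the role of Lemmas~\ref{lem kiselman} and~\ref{lem thmD Dem et al}, and the resulting estimate is local to $U$ through $\sup_U(-\psi)$ and $\sup_U(V_\theta-\Psi_0)$, not a global $\sup_X$.)
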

Here we notice that $e^{{2(V_{\theta}-u)}/{a}} \in L^1(\mu)$ for every $a>0$ by Lemma~\ref{lem integ phi-u}. In the case $\theta=\omega_X$,
we have the following corollary:
\begin{corollary}\label{coro: main}Let $(X,\omega_X)$ be a compact K\"ahler manifold of dimension $n$.
	Let $\mu$ be a H\"older continuous measure 
	with H\"older constant $B$ and H\"older exponent $0<\beta\leq 1$
	with respect to $dist_{L^1}$ on $\mathcal{C}$.
	Assume that $\psi$ is a quasi-psh function
	on $X$ satisfying $\int_Xe^{-\psi}d\mu=\Vol(\theta)$, $\omega_X+a_0dd^c\psi\geq 0$ and \begin{equation}\label{eq: H}
	    \int_Xh(-\psi)e^{-\psi}\mu\leq C_0,
	\end{equation}
	where $a_0, C_0>0$ are constants and $h: (0, \infty)\rightarrow (0, \infty)$ is an increasing, 
	concave function with $h(\infty)=\infty$. Suppose that  
	$u\in\mathcal{E} (X, \omega_X)$ is a solution to the following equation \begin{equation}\label{eq: cmae_kahler} (\omega_X+\ddc u)^n=e^{-\psi}\mu,\quad\sup_X u=0.\end{equation}
	Then for each $U\Subset X\backslash\{\psi=-\infty\}$, there exists a continuous function $F_U:\R_+\rightarrow\R_+$
	with $F(0)=0$ such that
	$$|u(z_1)-u(z_2)|\leq F_U(\dist (z_1, z_2)),$$
	for every $z_1, z_2\in U$.  Moreover, the choice of $F_U$ depends only on $X$, $U$, $\omega_X$, $n$, $\theta$, $a_0$, $B$, $\beta$, $\sup_U(-\psi)$, $C_0$,
	 and $h$.
\end{corollary}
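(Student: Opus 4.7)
The plan is to derive Corollary~\ref{coro: main} from Theorem~\ref{main}. Since $\omega_X$ is K\"ahler, the class $\{\omega_X\}$ is big with ${\rm Amp}(\omega_X)=X$ and $V_{\omega_X}\equiv 0$, so all hypotheses of Theorem~\ref{main} are satisfied once we exhibit an upper bound function for
\[
H(a)=\int_X e^{-2u/a}\,d\mu
\]
depending only on the data listed in Corollary~\ref{coro: main}, namely $X,\omega_X,n,\theta,a_0,B,\beta,C_0,h$. Everything therefore reduces to producing such an $H_{\max}(a)$.

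First I would use the H\"older continuity of $\mu$ on $\mathcal{C}$ to extract a Skoda-type inequality $\int_X e^{-\alpha_0\phi}\,d\mu\le A_0$ for every $\phi\in\PSH(X,\omega_X)$ with $\sup_X\phi=0$, with $\alpha_0,A_0$ depending only on $X,\omega_X,n,B,\beta$. Applied to $u$, this already yields a constant bound for $H(a)$ whenever $a\ge 2/\alpha_0$. For the remaining regime $0<a<2/\alpha_0$ the hypothesis~\eqref{eq: H} must enter. The key observation is that $a_0\psi\in\PSH(X,\omega_X)$, by $\omega_X+a_0\ddc\psi\ge 0$; a comparison/domination argument in the spirit of Ko\l{}odziej and Eyssidieux--Guedj--Zeriahi, adapted to finite-energy classes by Di~Nezza--Lu, then yields an estimate of the form $-u\le a_0(-\psi)+M'$ in an integral/capacity sense, with $M'$ explicit in the data. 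Splitting the integration of $e^{-2u/a}\,d\mu$ along $\{-\psi\le t\}$ and $\{-\psi>t\}$, and using $\mu(\{-\psi>t\})\le C_0/h(t)$ from~\eqref{eq: H} via Chebyshev with the concave weight $h$, an optimization in $t$ then produces an explicit $H_{\max}(a)$ valid for all $a>0$.

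The hard part is the quantitative comparison $-u\le a_0(-\psi)+M'$: a priori $u\in\mathcal{E}(X,\omega_X)$ need not be bounded, and the right-hand side $e^{-\psi}\mu$ is singular along $\{\psi=-\infty\}$, so the comparison must be derived in an integral/capacity sense with all constants tracked. The role of the concavity and divergence of $h$ at infinity is precisely to make the resulting capacity estimates admissible in the sense of Ko\l{}odziej, which is what closes the iteration. Once $M'$ is obtained, assembling $H_{\max}(a)$ and feeding it into Theorem~\ref{main} produces the continuous function $F_U$ on any $U\Subset X\setminus\{\psi=-\infty\}$, noting that $\sup_U(-\psi)<\infty$ on such $U$ because $\psi$ is upper semicontinuous.
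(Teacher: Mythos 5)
Your reduction is the right one: since $V_{\omega_X}=0$ and ${\rm Amp}(\omega_X)=X$, everything comes down to producing an upper bound function for $H(a)=\int_Xe^{-2u/a}\,d\mu$, for \emph{all} $a>0$ (note that in the proof of Theorem~\ref{main} the parameter $a$ is sent to $0$, so the large-$a$ regime handled by your Skoda-type inequality for H\"older measures is not the relevant one). The problem is the mechanism you propose for small $a$. First, the comparison $-u\leq a_0(-\psi)+M'$ with $M'$ depending only on the data is asserted rather than proved, and the natural tool in the paper (Lemma~\ref{lem DL17}) cannot be used here without circularity: its constant already involves $\log_+\int_Xe^{2(V_\theta-u)/(a\lambda)}d\mu$, i.e.\ the very quantity you are trying to bound. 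Second, and more seriously, even granting that comparison the splitting argument does not close. Chebyshev on \eqref{eq: H} gives $\mu(\{-\psi>s\})\leq C_0e^{-s}/h(s)$, so with $N=2a_0/a>1$ one gets
\begin{equation*}
\int_{\{-\psi>t\}}e^{N(-\psi)}\,d\mu\;\lesssim\;\int_t^{\infty}\frac{e^{(N-1)s}}{h(s)}\,ds\;=\;+\infty,
\end{equation*}
since $h$ is concave and hence grows at most linearly. The hypothesis \eqref{eq: H} is only infinitesimally stronger than $e^{-\psi}\in L^1(\mu)$ and cannot yield exponential integrability of $-\psi$ at rates $N>1$; indeed $\psi$ may have positive Lelong numbers, so $e^{Na_0(-\psi)}$ need not be integrable at all for large $N$. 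Bounding $-u$ by $a_0(-\psi)$ discards precisely the feature of $u$ that makes $H(a)$ finite for every $a$, namely that $u\in\mathcal E(X,\omega_X)$ has zero Lelong numbers.

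The paper's route is structurally different and is worth internalizing: condition \eqref{eq: H} is used, via Proposition~\ref{prop: unif-capa}, to show that the measures $e^{-\psi}\mu$ in the class $\mathcal N$ are uniformly absolutely continuous with respect to capacity; Lemma~\ref{lem: compact} then shows that the corresponding family of normalized solutions is relatively compact in $\mathcal E(X,\omega_X)$; and since finite-energy functions have zero Lelong numbers, the uniform Skoda integrability theorem over this compact family gives, for each $a>0$, a bound on $\int_Xe^{-2u/a}\,d\mu$ depending only on $X,\omega_X,n,a_0,B,\beta,C_0,h$. That compactness-plus-zero-Lelong-number input is the missing ingredient in your argument; without it, or some equivalent quantitative substitute, the small-$a$ bound on $H(a)$ does not follow.
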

Let us emphasize that the continuity of the solution $u$ to equation~\eqref{eq: cmae_kahler} was established by Di Nezza and Lu~\cite[Theorem 3.1]{di2017complex}. The contribution of the corollary above is to establish the equicontinuity of the family of solutions corresponding to pairs $(\mu,\psi)$ which satisfy the condition~\eqref{eq: H}, given $h$ and $C_0$. We also note that for each pair $(\mu,\psi)$ with $e^{-\psi}\in L^1(X,\mu)$, there always exists a pair $(h,C_0)$ such that the condition~\eqref{eq: H} holds; cf. Remark~\ref{rmk: hC}. 

\subsection*{Acknowledgment.}  
This work was done while the authors were visiting the Vietnam Institute for Advanced Study in Mathematics (VIASM), and they would like to thank VIASM for its hospitality and support.
\section{Preliminaries} 
Throughout the paper, we let $X$ denote a compact K\"ahler manifold of dimension $n$, equipped with a K\"ahler form $\omega_X$. 
We denote by \(dV_{\omega_X}:={\omega_X^n} \)
the volume form associated with $\omega_X$. For any measure $\mu$ on $X$, we write $L^1(\mu)$ for $L^1(X,d\mu)$.

\subsection{Quasi-plurisubharmonic functions}

Recall that an upper semi-continuous function $ \varphi:X \rightarrow\mathbb{R}\cup\{-\infty\} $
is called {\it quasi-plurisubharmonic} ({\it quasi-psh} for short) if it is locally the sum of a smooth and a plurisubharmonic (psh for short) function. 

We say that $\varphi$ is {\it $\theta$-plurisubharmonic}  ({\it $\theta$-psh} for short) if it is quasi-psh, and $$\theta_\varphi:=\theta+\ddc \varphi\geq 0$$ in the sense of currents, where ${\rm d}=
\partial+\Bar{\partial}$ and ${\rm d}^c=\frac{i}{2\pi}(\Bar{\partial}-\partial)$ so that $\ddc=\frac{i}{\pi}\partial\Bar{\partial}$. 
We let $\PSH(X,\theta)$ denote the set of all $\theta$-psh functions, which are not identically $-\infty$. This set is endowed with 
the weak topology, which coincides  with 
 the $L^1$-topology. By Hartogs' lemma, $\varphi\mapsto\sup_X\varphi$ is continuous in this weak topology, 
it follows that the set of $\varphi\in\PSH(X,\theta)$, with $\sup_X\varphi=0$ is compact. 
We refer the reader to~\cite{demaillycomplex,guedj2017degenerate} for more properties of $\theta$-psh functions.

The cohomology class $\{\theta\}$ is said to be {\em big} if the set $\PSH(X,\theta-\varepsilon\omega_X)$ is not empty for some $\varepsilon>0$.	
	We now assume that $\{\theta\}$ is big unless otherwise specified.
By Demailly's regularization theorem \cite{demailly1992regularization}, we can find a closed positive $(1,1)$-current $T_0\in \{\theta\}$ such that $$T_0=\theta+\ddc\Psi_0\geq \varepsilon_0\omega_X,$$ for some $\varepsilon_0>0$, where $\Psi_0$ is a quasi-psh function with \emph{analytic singularities}, i.e., locally 
	$$	\Psi_0=c\log\left[\sum_{j=1}^{N}|f_j|^2\right]+g,
	$$
	where the $f_j$'s are holomorphic functions and $g$ is bounded. Such a current $T_0$ is then smooth on a Zariski open subset $X\setminus\{\Psi_0=-\infty\}$. We thus define the {\em ample locus} $\textrm{Amp}(\theta)$ of $\theta$ as the largest such Zariski open subset (which exists by the Noetherian property of closed analytic subsets; cf.~\cite{boucksom2004divisorial}).

Given $\varphi,\psi\in \PSH(X,\theta)$, we say that $\varphi$ is {\it less singular} than  $\psi$, and denote by $\psi\preceq\varphi$, if  there exists a constant $C$ such that $\psi\leq \varphi+C$ on $X$. We say that $\varphi,\psi$ have the {\em same singularity type}, and denote by $\varphi\simeq\psi$ if $\varphi\preceq\psi$ and $\psi\preceq \varphi$. 
There is a
natural least singular potential in $\PSH(X,\theta)$ given by
\begin{align*}
	V_{\theta}:=\sup\{\varphi\in \PSH(X,\theta): \varphi\leq 0\}.
	\end{align*}
A function $\varphi$ is said to have {\em minimal singularities} if it has the same singularity type as $V_\theta$. In particular, $V_\theta=0$ if $\theta$ is semi-positive.
We see that $V_\theta$ is locally bounded in the ample locus.

\subsection{Demailly's regularization}

   We consider the exponential mapping $ \exp_x: T_x X\ni \zeta\to X$ as follows: $\exp_x(\zeta)=\gamma(1)$ where $\gamma:[0,1]\to X$ is the geodesic starting from $x=\gamma(0)$ with the initial velocity  $\gamma'(0)=\zeta$. In the Euclidean space $\mathbb{C}^n$, the exponential map $\exp_x(\zeta)=x+\zeta$. 

 Following~\cite{demailly1982estimations,demailly1994regularization}, for a quasi-psh function $u$,
we define its {\em $\delta$-regularization} $\rho_\delta u=\Psi(u)(z,\delta)$ where
\begin{equation}\label{phie}
\Psi(u)(z,w)=\int_{\zeta\in T_{z}X}
u\big(\exp_z(w\zeta)\big)\chi\big({|\zeta|^2_{\omega_X }}\big)\,dV_{\omega_X}(\zeta),\ \delta>0,
\end{equation}
where $\chi: \mathbb R_{+}\rightarrow\mathbb R_{+}$ is defined by
\begin{center}
$\chi(t)=\begin{cases}\frac {\eta}{(1-t)^2}\exp(\frac 1{t-1})&\ {\rm if}\ 0\leq t\leq 1,\\0&\
	{\rm if}\ t>1,\end{cases}$
\end{center}
with a suitable constant $\eta$ such that
$\int_{\mathbb C^n}\chi(\Vert z\Vert^2)\,dV(z)=1$, where $dV$ denotes the Lebesgue measure on $\mathbb{C}^n$. The $\delta$-regularization $\rho_\delta u$ can be written by
\[\rho_\delta u(z)=\frac{1}{\delta^{2n}}\int_{\zeta\in T_{z}X}
u\big(\exp_z(\zeta)\big)\chi\bigg(\frac{|\zeta|^2_{\omega_X }}{\delta^2}\bigg)\,dV_{\omega_X}(\zeta).\]
Intuitively, this corresponds to the familiar convolution with smoothing kernels. Actually, in the case of $\mathbb{C}^n$ endowed with the Euclidean metric, this is exactly the smoothing convolution; see~
\cite[Remark 4.6]{demailly1994regularization}.

The following lemma is a combination of \cite[Theorem 4.1]{demailly1994regularization} and \cite[Lemma 1.12]{berman2012regularity}.
\begin{lemma}\label{lem kiselman}
	Let $u$ be a $\theta$-psh function and define the Kiselman-Legendre transform with level $c$ by
	\begin{equation}\label{kisleg}
	\Phi_{c,\delta}\coloneqq\inf _{0< t\leq \delta }\Big[\rho_tu(z)+K (t^2-\delta^2)+K (t-\delta)
	-c\log\Big(\frac t{\delta}\Big)\Big].
	\end{equation}	
	Then for  some positive constants $0<\delta_0<1$ and $K>1$  depending on the curvature tensor of $\omega_X$ on $X$,
	 the function $\rho_tu(z)+Kt^2$ is increasing in $t\in (0, \delta_0]$ and
	one has the following estimate for the complex Hessian:
	\begin{equation}\label{hessest}
	\theta+dd^c \Phi_{c,\delta}\geq -(Ac+K^2\delta)\,\omega_X,
	\end{equation}
	where $A$ is a lower bound of the negative part of the bisectional curvature of $\omega_X$.
\end{lemma}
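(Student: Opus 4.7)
The plan is to adapt Demailly's exponential-map regularization technique, as refined in \cite{berman2012regularity, demailly1994regularization}, together with Kiselman's minimum principle to convert the infimum over the scale parameter into a $\theta$-psh function with controlled singularities.

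\textbf{Step 1: Monotonicity.} I would first establish that $t \mapsto \rho_t u(z) + Kt^2$ is non-decreasing on $(0, \delta_0]$. The plan is to differentiate the integral expression for $\rho_t u(z)$ in the parameter $t$, perform an integration by parts against the kernel $\chi$, and decompose the result into a convolution of $\ddc u$ (which is bounded below by $-\theta \geq -C\omega_X$) against a non-negative kernel, plus a curvature correction arising from the non-holomorphicity of $\exp_z$. Both terms can be estimated from below by $-2Kt$ for $K$ large enough in terms of the bisectional curvature of $\omega_X$ and $\delta_0$ smaller than the injectivity radius.

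\textbf{Step 2: Joint Hessian estimate.} Next I would extend the family to depend plurisubharmonically on a complex parameter. For $w$ in a small disk set $\tilde u(z, w) := \rho_{|w|} u(z)$. Building on \cite[Theorem 4.1]{demailly1994regularization}, one obtains a joint estimate of the form
\[
\theta + \ddc_{(z,w)} \bigl(\tilde u(z,w) + K|w|^2\bigr) \geq -A\,\lambda(z, |w|)\,\omega_X - 2K|w|\,\omega_X,
\]
where $\lambda(z, t) \lesssim -\log t$ is a Lelong-number-type defect and $A$ controls the negative part of the bisectional curvature of $\omega_X$. The $K|w|^2$ term here matches the convexification from Step~1, upgrading monotonicity in $|w|$ to plurisubharmonicity in the full pair $(z,w)$ up to the singular defect.

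\textbf{Step 3: Kiselman--Legendre transform.} Finally I apply Kiselman's minimum principle to the family
\[
\Psi(z, w) := \rho_{|w|} u(z) + K(|w|^2 - \delta^2) + K(|w|-\delta) - c\log(|w|/\delta),
\]
which depends only on $|w|$. The key point is that the weight $-c\log(|w|/\delta)$ is precisely calibrated to absorb the Lelong-type defect $A\lambda$, replacing it by an acceptable $Ac\,\omega_X$ correction. The minimum principle then yields that $\Phi_{c,\delta}(z) = \inf_{0 < |w| \leq \delta} \Psi(z, w)$ satisfies $\theta + \ddc \Phi_{c,\delta} \geq -(Ac + 2K\delta)\omega_X$, which is the desired estimate. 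The linear terms $K(|w|^2-\delta^2)$ and $K(|w|-\delta)$ vanish at $|w| = \delta$ and, combined with the monotonicity from Step~1, guarantee that the infimum is well-defined and matches $\rho_\delta u$ at the endpoint.

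\textbf{Main obstacle.} The technical crux is Step~2: producing a quantitative joint Hessian estimate for the exponential-map convolution that handles the non-holomorphic piece of $\exp_z$ via the bisectional curvature constant $A$. This requires a careful local computation in normal coordinates, and the Lelong-type defect $\lambda$ must be tracked with the precise logarithmic rate so that the $-c\log(|w|/\delta)$ weight in Step~3 can cancel it exactly, leaving only the claimed $(Ac + 2K\delta)\omega_X$ error.
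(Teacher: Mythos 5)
Your proposal is correct and follows essentially the same route as the paper, which simply defers to \cite[Lemma 4.1]{kolodziej2019stability} (itself built on \cite[Theorem 4.1]{demailly1994regularization} and \cite[Lemma 1.12]{berman2012regularity}): monotonicity of $t\mapsto\rho_tu+Kt^2$, the joint Hessian estimate for $(z,w)\mapsto\rho_{|w|}u(z)$ with the curvature constant $A$ and the Lelong-type defect $\lambda$, and Kiselman's minimum principle with the $-c\log(|w|/\delta)$ weight forcing $\lambda\leq c+O(\delta)$ at the minimizing radius. The only loose point is the aside ``$\lambda(z,t)\lesssim-\log t$'' (the defect is the logarithmic derivative of $\rho_tu(z)+Kt^2$, decreasing to the Lelong number, and is controlled at the minimizer by the first-order condition rather than by a $\log$ bound), but this does not affect the architecture of the argument.
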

  \begin{proof} 
 The proof is identical to that of~\cite[Lemma 4.1]{kolodziej2019stability}, which is still valid without the boundedness of $u$.
 \end{proof}

\begin{remark}
   The above lemma is essential for our study of the modulus of continuity of solutions to complex Monge--Ampère equations. We follow the same strategy as in the proof of~\cite[Theorem D]{demailly2014holder}. 
    However, the function $\Phi_{c,\delta}$ differs from the one used there by the addition of the term $K(t - \delta)$, as introduced in~\cite[Remark 4.7]{demailly1994regularization}. This extra term is needed to handle the mixed term $|dz||dw|$ that arises in the estimates; cf.~\cite{kolodziej2019stability}.
\end{remark}
\begin{lemma}\label{Jen}
 Let $u\in\PSH(X,\theta)$.
 If
	$\rho_{\delta} u$ is the regularization of
	$u$ defined as in (\ref{phie}) then for $\delta$ small enough we have
	$$
	\int_{X} \frac{\rho_{\delta} u-u}{\delta ^2 }\omega_X^n \leq C\|u\|_{L^1(X,dV_X)} ,
	$$
	where $C$ only depends on $n$, $X$, $\omega_X$.
\end{lemma}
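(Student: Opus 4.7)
The plan is to express $\int_X \rho_\delta u\,\omega_X^n$ as a pairing of $u$ with a function $M_\delta$ that is very close to the constant $1$, with error $O(\delta^2)$ coming from the curvature of $\omega_X$. In the flat case (e.g.\ a complex torus) the exponential map reduces to a translation, which is measure-preserving, so the integral $\int_X (\rho_\delta u - u)\,\omega_X^n$ vanishes exactly; on a general K\"ahler manifold the deviation of the volume form from the flat one in normal coordinates will produce precisely the error we need.

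First, I would unfold the definition~\eqref{phie} and invoke Fubini's theorem on the double integral over $\{(z,\zeta) : z\in X,\ \zeta\in T_zX,\ |\zeta|_{\omega_X}<\delta\}$. Next, for each fixed $z$, I would change variables $y = \exp_z(\zeta)$. In normal coordinates centered at $z$ the K\"ahler metric satisfies $g_{j\bar k}(\zeta)=\delta_{j\bar k}+O(|\zeta|^2)$, so the Jacobian relating $dV_{\omega_X,z}(\zeta)$ and $\omega_X^n(y)$ equals $1+O(|\zeta|^2)$, with constant depending only on a curvature bound of $\omega_X$. This rewrites
\[
\int_X \rho_\delta u(z)\,\omega_X^n(z) \;=\; \int_X u(y)\, M_\delta(y)\,\omega_X^n(y),
\]
where $M_\delta(y) = \int_X \delta^{-2n}\,\chi(d(z,y)^2/\delta^2)\,(1+O(\delta^2))\,\omega_X^n(z)$. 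Applying the same Jacobian computation again (now in normal coordinates centered at $y$) to carry out the $z$-integral gives $M_\delta(y) = 1 + R_\delta(y)$ with $|R_\delta(y)|\leq C\delta^2$ uniformly in $y$ for $\delta$ small enough; this is consistent with the fact that $\rho_\delta 1 \equiv 1$ on the nose.

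The conclusion is then immediate:
\[
\int_X (\rho_\delta u - u)\,\omega_X^n \;=\; \int_X u(y)\, R_\delta(y)\,\omega_X^n(y) \;\leq\; \int_X |u|\,|R_\delta|\,\omega_X^n \;\leq\; C\delta^2\,\|u\|_{L^1(X,\omega_X)},
\]
and dividing by $\delta^2$ gives the lemma with $C$ depending only on $n$, $X$, $\omega_X$, and the fixed weight $\chi$.

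The main obstacle will be technical rather than conceptual: carefully tracking the various Jacobians (the intrinsic one from $dV_{\omega_X,z}(\zeta)$ vs.\ $\omega_X^n(y)$, and the one picked up when swapping the roles of $z$ and $y$ via $d(z,y)=d(y,z)$) and verifying that the error is genuinely $O(\delta^2)$ uniformly on $X$ with constants depending only on $(X,\omega_X)$. Note that the $\theta$-plurisubharmonicity of $u$ enters only to the extent of ensuring $u\in L^1(X,\omega_X)$ so that the integrals make sense; the estimate itself is not using the sign of $\theta+\ddc u$.
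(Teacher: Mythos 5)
Your argument is correct, and it reaches the conclusion by a genuinely different route from the paper. The paper simply defers to \cite[Lemma 2.3]{demailly2014holder}, whose proof runs through the Lelong--Jensen formula: one expresses $\rho_\delta u(z)-u(z)$ as an integral of averages of $\Delta(u\circ\exp_z)$ over shrinking balls (plus curvature corrections), integrates over $z\in X$, and exploits the fact that $\int_X \ddc u\wedge\omega_X^{n-1}$ is a cohomological constant; quasi-plurisubharmonicity is used there to control signs. Your proof instead treats $\rho_\delta$ as an operator and computes its adjoint acting on the constant function: $\int_X\rho_\delta u\,\omega_X^n=\int_X u\,M_\delta\,\omega_X^n$ with $M_\delta=1+O(\delta^2)$. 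This is more elementary, yields the stronger two-sided bound $\bigl|\int_X(\rho_\delta u-u)\,\omega_X^n\bigr|\le C\delta^2\|u\|_{L^1}$, and, as you note, uses nothing about $u$ beyond $u\in L^1$. The one point you should make fully explicit is why the error is $O(\delta^2)$ rather than $O(\delta)$: you must take $\delta$ below the injectivity radius so that $\exp_z$ is a diffeomorphism onto $B(z,\delta)$ with $d(z,\exp_z\zeta)=|\zeta|_{\omega_X}$, and then invoke the expansion $\det(d\exp_z)_\zeta=1-\tfrac16\mathrm{Ric}_z(\zeta,\zeta)+O(|\zeta|^3)$, which has no first-order term (and any odd term would in any case integrate to zero against the radial kernel $\chi(|\zeta|^2/\delta^2)$). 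With that observation spelled out, both Jacobian factors contribute $1+O(\delta^2)$ uniformly on the compact manifold, and the constant depends only on $n$, the curvature bounds and injectivity radius of $(X,\omega_X)$, and the fixed kernel $\chi$, as required.
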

\begin{proof}
    The proof follows verbatim from that of~\cite[Lemma 2.3]{demailly2014holder}.
\end{proof}
\subsection{Non-pluripolar product}

Let $\theta^1,\ldots,\theta^n$ be closed smooth real (1,1) form representing big cohomology classes, and $\varphi_j\in\PSH(X,\theta^j)$. Following the construction of Bedford--Taylor~\cite{bedford1987fine}, it has been shown in~\cite{boucksom2010monge} that for each $k\in\mathbb{N}$,
\[ \mathbf{1}_{\cap_j\{\varphi_j>V_{\theta^j}-k\}}(\theta^1+\ddc{\max(\varphi_1,V_{\theta^1}-k)})\wedge\cdots\wedge (\theta^n+\ddc{\max(\varphi_n, V_{\theta^n}-k)})\] is well-defined as a  Borel positive measure with finite total mass. The sequence of these measures is non-decreasing in $k$ and it converges weakly to the so-called {\em Monge--Amp\`ere product}, denoted by \[ (\theta^1+\ddc{\varphi_1})\wedge \cdots\wedge (\theta^n+\ddc{\varphi_n}),\] 
which does not charge pluripolar sets by definition. In particular, $\theta^1=\cdots=\theta^n=\theta$ and $\varphi_1=\cdots=\varphi_n$ we obtain the non-pluripolar
Monge--Amp\`ere measure of $\varphi$, denoted by $(\theta+\ddc\varphi)^n$ or simply by $\theta_\varphi^n$.

The {\em volume} of a big cohomology class $\{\theta\}$ is given by the total mass of the non-pluripolar Monge--Ampère
measure of $V_\theta$, i.e.,  $${\rm \Vol}(\theta):=\int_{X}\theta_{V_\theta}^n.$$
  We say that $\varphi\in\PSH(X,\theta)$ has {\it full Monge--Amp\`ere mass} if $\int_X\theta_{\varphi}^n=\Vol(\theta)$. We let \begin{align*}
      \mathcal{E}(X,\theta):=\left\{\varphi\in\PSH(X,\theta):\int_X\theta_{\varphi}^n=\Vol(\theta) \right\}
  \end{align*}
  denote the set of $\theta$-psh functions with full Monge--Amp\`ere mass.
  Note that $\theta$-psh functions with minimal singularities have full  Monge--Amp\`ere mass (see \cite[Theorem 1.16]{boucksom2010monge} for more details), but the converse is not true. 

\smallskip 

Given a measurable function $f:X\to\mathbb{R}$, we define the {\em $\theta$-psh envelope} of $h$ by
\begin{equation*}
P_\theta(f):=(\sup\{u\in\PSH(X,\theta): u\leq f\;\text{on}\, X \})^*,
\end{equation*} where the star means we take the upper semi-continuous regularization. 

Given a $\theta$-psh function $\phi$, 
Ross and Witt-Nystr\"om~\cite{ross2014analytic} introduced the ``rooftop envelope'' as follows
\[P_\theta[\phi](f)= \left( \lim_{C\to +\infty}P_\theta(\min(\phi+C,f))\right)^*. \]
When $f=0$ we simply write $P_\theta[\phi]$. 
A function $\phi\in\PSH(X,\theta)$ is called a {\em model potential} if $$\int_X\theta_\phi^n>0\quad\text{and}\quad\phi=P_\theta[\phi].$$

We recall the notion of Monge--Ampère capacity introduced in~\cite{di2017complex,di2015generalized}.
For $\phi \in \PSH(X,\theta)$ and a Borel subset $E \subset X$, the {\em relative Monge–Ampère capacity} is defined by
    \begin{equation*}
        \capacity_{\phi}(E):=\sup\left\{\int_E\theta_u^n: u\in\PSH(X,\theta),\;\phi-1\leq u\leq \phi \right\}.
    \end{equation*}
This definition recovers the Monge--Ampère capacity used in~\cite{boucksom2010monge} when $\phi = V_\theta$. In that case, we denote it simply by $\capacity_\theta$.

\begin{definition}
A family of positive measures $\{\mu_i \}_{i\in I}$ on $X$ is said to be uniformly absolutely
continuous with respect to $\phi$-capacity if, for every $\varepsilon>0$, there exists $\delta>0$ such that for each Borel subset $E\subset X$ satisfying $\capacity_\phi(E)<\delta$ the inequality $\mu_i(E)<\varepsilon$ holds for all $i\in I$. We denote this by $\mu_i<<\capacity_\phi$. In particular, all such measures must
vanish on pluripolar sets.
\end{definition}

\subsection{H\"older continuous measures}
\begin{definition}\label{def: holder}
	A positive measure $\mu$ on $X$ is said to be $\PSH(X, \theta)$-H\"older continuous
	(or, for short, H\"older continuous) if there exist $B>0$ and $0<\beta\leq 1$ such that
	\begin{equation*}
	    \int_{X}(u-v)d\mu\leq B \left(\int_X|u-v|\omega_X^n\right)^{\beta},
	\end{equation*}
for every $u, v\in\mathcal{C}:=\{w\in \PSH(X, \theta): \int_X w\omega_X^n=0 \}.$ In this case, we say that $B$ is
the H\"older constant and
$\beta$ is the H\"older exponent of $\mu$ with respect to ${\rm dist}_{L^1}$ on $\mathcal{C}$. We let $\mathcal{M}(B,\beta)$ denote the set of such measures.
\end{definition}
\begin{lemma}[{\cite[Lemma 3.3]{dinh2014characterization}}]\label{lem DN14 def holder}
		Assume $\mu$ is a H\"older continuous measure 
		with H\"older constant B and H\"older exponent $0<\beta\leq 1$
		 with respect to $\dist_{L^1}$ on $\mathcal{C}$. Then there exists $C>0$ depending only on $B$ such that
		\begin{equation*}
		    \|u-v\|_{L^1(\mu)}\leq C\max\big\{\|u-v\|_{L^1(\omega_X^n)}, \|u-v\|_{L^1(\omega_X^n)}^{\beta} \big\},
		\end{equation*}
	for all $u, v\in \PSH(X, \theta)$. In particular, if a family $\mathcal{F}$  of $\theta$-psh functions
	is a relatively compact subset of $L^1(\omega_X^n)$ then there exists $C_{\mathcal{F}}>0$ such that
	\begin{equation*}
	    \|u-v\|_{L^1(\mu)}\leq C_{\mathcal{F}} \|u-v\|_{L^1(\omega_X^n)}^{\beta},
	\end{equation*}
	for all $u, v\in\mathcal{F}$.
\end{lemma}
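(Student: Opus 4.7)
The plan is to reduce the problem to bounding nonnegative differences of $\theta$-psh functions, after which the H\"older hypothesis can be applied to a suitable normalization. Given $u, v \in \PSH(X,\theta)$, I would set $w := \max(u, v)$, which again lies in $\PSH(X,\theta)$. The pointwise identity $|u - v| = (w - u) + (w - v)$ decomposes both $\|u-v\|_{L^1(\mu)}$ and $\|u-v\|_{L^1}$ into two nonnegative contributions, and moreover $\|w - u\|_{L^1},\, \|w - v\|_{L^1} \leq \|u - v\|_{L^1}$. Hence it suffices to establish, for any $w', u' \in \PSH(X,\theta)$ with $w' \geq u'$, the estimate
\[
\int_X (w' - u')\,d\mu \;\leq\; C\,\max\bigl\{\|w' - u'\|_{L^1},\, \|w' - u'\|_{L^1}^\beta\bigr\}.
\]

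To access the H\"older hypothesis, which is formulated only on the normalized class $\mathcal{C}$, I would shift by integral averages. Set $V := \int_X \omega_X^n$, $a := \frac{1}{V}\int_X w'\,\omega_X^n$, $b := \frac{1}{V}\int_X u'\,\omega_X^n$, and $\tilde w := w' - a$, $\tilde u := u' - b$, both of which lie in $\mathcal{C}$. Since $w' \geq u'$, the constants satisfy $a \geq b$ and $a - b = \frac{1}{V}\|w' - u'\|_{L^1}$. Applying the H\"older hypothesis to $(\tilde w, \tilde u)$ and restoring the constants yields
\[
\int_X (w' - u')\,d\mu \;\leq\; B\,\|\tilde w - \tilde u\|_{L^1}^\beta + \frac{\mu(X)}{V}\,\|w' - u'\|_{L^1}.
\]
A triangle-inequality estimate gives $\|\tilde w - \tilde u\|_{L^1} \leq 2\|w' - u'\|_{L^1}$, which yields the desired two-term bound with a constant depending only on $B$, $\mu(X)$, and $V$.

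The ``in particular'' clause then follows immediately: if $\mathcal F$ is relatively compact in $L^1(X)$, then $D := \sup_{u,v \in \mathcal F}\|u - v\|_{L^1}$ is finite, so $\|u - v\|_{L^1} \leq D^{1-\beta}\|u - v\|_{L^1}^\beta$ and the linear term is absorbed into the H\"older term. The main obstacle is organizational rather than analytic: the H\"older hypothesis is stated only on normalized potentials, so one must verify that the shifts by averages move the two sides of the comparison by compatible amounts. Taking $w = \max(u, v)$ as a common upper envelope is precisely what makes this compatibility transparent, since then $w' - u' \geq 0$ and its $L^1$-norm coincides with its integral.
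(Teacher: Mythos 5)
Your proof is correct and follows essentially the same route as the cited reference (the paper itself gives no proof, deferring to Dinh--Nguy\^en, Lemma 3.3): reduce to one-sided differences via the upper envelope $\max(u,v)$, shift by the $\omega_X^n$-averages to land in $\mathcal{C}$, and apply the H\"older hypothesis there. The only caveat is that your constant also depends on $\mu(X)$ and $\int_X\omega_X^n$ rather than on $B$ alone; these are fixed background quantities, and indeed some dependence on the total mass $\mu(X)$ is unavoidable (consider $\mu=c\,\omega_X^n$ with $c$ large, for which any $B>0$ works in the definition), so the statement's ``depending only on $B$'' should be read with the mass normalized.
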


\begin{proposition}\label{prop exp integrable}
	Let $\mu$ be a H\"older continuous measure on $X$
	with H\"oler constant $B$ and H\"older exponent $0<\beta\leq 1$ with respect to ${\rm dist}_{L^1}$ on $\mathcal{C}$. Assume that
	$u, v$ are $\theta$-psh functions satisfying
	\begin{equation*}
	    \int_Xe^{m(u- v)}\omega_X^n<C_0,
	\end{equation*}
for positive constants $m$, $C_0$. Then, for every $0<\gamma<\beta$, there exists $C>0$ depending only on $C_0, 
m, B,\beta$ and $\gamma$ such that
\begin{equation*}
    \int_Xe^{\gamma m(u-v)}d\mu<C.
\end{equation*}
\end{proposition}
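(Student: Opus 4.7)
The strategy is to upgrade the hypothesis into an exponential decay estimate for $\mu(\{u - v > s\})$ of order strictly better than $e^{-\gamma m s}$, and then apply the layer-cake formula. Since $e^{-x}\leq 1$ for $x\geq 0$, we have $e^{\gamma m(u-v)}\leq e^{\gamma m(u-v)^+}$, hence
\begin{equation*}
\int_X e^{\gamma m(u-v)}\, d\mu \;\leq\; \mu(X) + \gamma m \int_0^\infty e^{\gamma m s}\, \mu(\{u - v > s\})\, ds.
\end{equation*}
Thus it suffices to establish $\mu(\{u - v > s\}) \leq C_1 e^{-m\beta s}$ for $s\geq s_0$, with $C_1$ and $s_0$ depending only on $C_0, m, B, \beta$; the hypothesis $\gamma<\beta$ then makes the resulting integral convergent.

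To obtain this decay, for each $s\geq 1$ introduce the auxiliary $\theta$-psh function $w_s := \max(u, v + s - 1)$ and set $f_s := w_s - (v + s - 1) = (u - v - s + 1)^+ \geq 0$. Since $f_s \geq 1$ on $\{u - v > s\}$, Markov's inequality combined with Lemma~\ref{lem DN14 def holder} applied to the $\theta$-psh pair $w_s$ and $v+s-1$ yields
\begin{equation*}
\mu(\{u - v > s\}) \;\leq\; \int_X f_s\, d\mu \;\leq\; C \max\bigl( \|f_s\|_{L^1(\omega_X^n)},\, \|f_s\|_{L^1(\omega_X^n)}^\beta\bigr),
\end{equation*}
with $C$ depending only on $B$. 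The hypothesis $\int_X e^{m(u-v)} \omega_X^n < C_0$ together with Markov gives $\omega_X^n(\{u - v > t\}) \leq C_0 e^{-mt}$, and layer-cake then produces
\begin{equation*}
\|f_s\|_{L^1(\omega_X^n)} \;=\; \int_0^\infty \omega_X^n(\{u - v > s - 1 + t\})\, dt \;\leq\; \frac{C_0 e^{m}}{m}\, e^{-m s}.
\end{equation*}
For $s$ sufficiently large this is at most $1$, so the maximum is attained by its $\beta$-power, which gives the desired bound $\mu(\{u - v > s\}) \leq C_1 e^{-m\beta s}$.

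Inserting this estimate back and splitting the integral into $[0,s_0]$ (controlled by $\mu(X)$, which is finite since $\mu$ is a finite measure) and $[s_0,\infty)$ (controlled by $\int_{s_0}^\infty e^{-m(\beta-\gamma)s}\,ds<\infty$), we get a finite bound depending only on $C_0, m, B, \beta, \gamma$. The key conceptual step is the choice of the auxiliary potential $w_s$: the Hölder hypothesis in Lemma~\ref{lem DN14 def holder} applies only to \emph{differences} of $\theta$-psh functions, so writing $f_s$ as such a difference is what allows us to translate the superlevel-set information for $u-v$ into an integral estimate. I expect the rest of the argument to be straightforward book-keeping of constants; the only point requiring modest care is the behaviour for small $s$, handled by the trivial bound $\mu(\{u-v>s\})\leq \mu(X)$.
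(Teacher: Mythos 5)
Your proof is correct and follows essentially the same route as the paper's: the auxiliary function $f_s=(u-v-s+1)^+$ is exactly the paper's $w_M-w=\max\{v,u-M\}-v$ with $M=s-1$, and both arguments combine Markov's inequality, Lemma~\ref{lem DN14 def holder} applied to this difference of $\theta$-psh functions, and the layer-cake formula to get the decay $\mu(\{u-v>s\})\lesssim e^{-\beta m s}$ before integrating. No substantive differences.
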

\begin{proof} We adapt the proof of~\cite[Proposition 4.4]{dinh2014characterization}.
	Denote $w=v-u$. We have
	\begin{equation}\label{eq0 exp integ}
		\omega_X^n(\{w<-M\})\leq\int_Xe^{-m(w+M)}\omega_X^n\leq C_0e^{-mM},
	\end{equation}
for every $M>0$. Denote 
	$$w_M=\max\{w, -M \}=\max\{v, u-M\}-u.$$
We have $w_M-w=\max\{v, u-M\}-v$ is the difference of two $\theta$-psh functions. 
By Lemma \ref{lem DN14 def holder}, there exists $C_1>0$ depending only on $B$ such that
\begin{equation}\label{eq1 exp integ}
\int_X(w_M-w)d\mu\leq C_1\max\left\{\|w_M-w\|_{L^1(\omega_X^n)}, \|w_M-w\|_{L^1(\omega_X^n)}^{\beta} \right\}.
\end{equation}
Moreover, we also have
\begin{equation}\label{eq2 exp integ}
\mu(\{w<-M-1\})\leq\int_X(w_M-w)d\mu,
\end{equation}
and by \eqref{eq0 exp integ}, 
\begin{equation}\label{eq3 exp integ}
\begin{split}
    \int_X(w_M-w)\omega_X^n&=\int_0^{\infty}\omega_X^n(\{w<-M-t\})dt\\
    &\leq C_0\int_0^{\infty}e^{-m(M+t)}dt
=\dfrac{C_0e^{-mM}}{m}.
\end{split}
\end{equation}
Combining \eqref{eq1 exp integ}, \eqref{eq2 exp integ} and \eqref{eq3 exp integ}, we get

$$	\mu(\{w<-M-1\})\leq C_1\max\left\{\dfrac{C_0e^{-mM}}{m}, \left(\dfrac{C_0e^{-mM}}{m}\right)^{\beta}\right\}
	\leq C_2\cdot e^{-\beta mM},$$
where $C_2=C_1\max\{\frac{C_0}{m}, (\frac{C_0}{m})^{\beta}\}$. Then, for every $0<\gamma<\beta$, we obtain
\begin{align*}
	\int_Xe^{-\gamma m w}d\mu-\mu (X)
	&=\gamma m\int_0^{\infty}\mu(w<-t)e^{\gamma m t}dt\\
	&\leq 	\gamma m C_2e^{\beta m}\int_0^{\infty}e^{(-\beta+\gamma)m t}dt\\
	&=\frac{C_2\gamma e^{\beta m}}{\beta-\gamma}.
\end{align*}
\end{proof}

\begin{lemma}\label{lem integ phi-u} Let $\mu\in\mathcal{M}(B,\beta)$ be a H\"older continuous measure on $X$
	and $u\in\mathcal{E}(X, \theta)$.
	Then
		$$\int_Xe^{m(V_{\theta}-u)}d\mu<+\infty,$$
for every $m>0$.
 \end{lemma}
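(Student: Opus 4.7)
The strategy is to first establish the Lebesgue counterpart
\begin{equation*}
\int_X e^{M(V_\theta-u)}\,\omega_X^n<+\infty \quad\text{for every } M>0,
\end{equation*}
and then transfer it to $\mu$ via Proposition \ref{prop exp integrable}. For the transfer, fix an arbitrary $m>0$, choose $M>m/\beta$, and set $\gamma:=m/M\in(0,\beta)$. Applying Proposition \ref{prop exp integrable} to the $\theta$-psh pair $(V_\theta,u)$ with exponent $M$ and the resulting constant $C_M$ then yields
\begin{equation*}
\int_X e^{m(V_\theta-u)}\,d\mu \;=\; \int_X e^{\gamma M(V_\theta-u)}\,d\mu \;<\; +\infty,
\end{equation*}
which is the desired conclusion.

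\textbf{Establishing the Lebesgue bound.} This is the main content. Since $u\in\mathcal{E}(X,\theta)$, the characterization of full Monge--Amp\`ere mass in the BEGZ theory (cf.\ \cite{boucksom2010monge}) forces $u$ and $V_\theta$ to share the same generalized Lelong numbers at every point of $X$: any strictly positive excess Lelong number of $u$ over $V_\theta$ would, by Demailly's comparison, produce a Bedford--Taylor Dirac atom invisible to the non-pluripolar product, contradicting $\int_X\theta_u^n=\Vol(\theta)$. In particular $\nu(u,x)=0$ for every $x\in\mathrm{Amp}(\theta)$, where $V_\theta$ is locally bounded. After normalizing $\sup_X u=0$ so that $u\leq V_\theta\leq 0$, one has $e^{M(V_\theta-u)}\leq e^{-Mu}$ pointwise. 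Skoda's integrability theorem, together with a finite covering of $\mathrm{Amp}(\theta)$ by coordinate charts on which $u$ has zero Lelong number, shows that $e^{-Mu}\in L^1_{\mathrm{loc}}(\mathrm{Amp}(\theta),\omega_X^n)$ for every $M>0$; since $X\setminus\mathrm{Amp}(\theta)$ is analytic (hence of $\omega_X^n$-measure zero) and since $V_\theta=-\infty$ there makes the integrand vanish $\omega_X^n$-a.e.\ on the complement, one concludes the required global bound.

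\textbf{Expected main obstacle.} The technical heart lies in passing from Skoda's local integrability to the global estimate: one has to control $V_\theta-u$ uniformly as one approaches the non-ample locus, where both potentials degenerate simultaneously. This is precisely where the full strength of the assumption $u\in\mathcal{E}(X,\theta)$ is used, through the sharp matching of Lelong numbers with those of $V_\theta$. Once this Lebesgue estimate is in hand, Proposition \ref{prop exp integrable}—which already encodes the delicate exchange between $\omega_X^n$ and the H\"older continuous measure $\mu$—delivers the result for any exponent $m>0$.
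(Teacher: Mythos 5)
Your overall plan — first prove $\int_X e^{M(V_\theta-u)}\omega_X^n<\infty$ for all $M>0$, then invoke Proposition \ref{prop exp integrable} with $\gamma=m/M<\beta$ — is exactly the paper's architecture. The transfer step is fine. The gap is entirely in your Lebesgue estimate, and it is a real one.

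You bound $e^{M(V_\theta-u)}\leq e^{-Mu}$ (using $V_\theta\leq 0$) and then try to integrate $e^{-Mu}$. But in the big setting this throws away too much: $u\in\mathcal{E}(X,\theta)$ does not imply that $u$ has zero Lelong numbers on $X$ — on the contrary, precisely because its Lelong numbers coincide with those of $V_\theta$, and $V_\theta$ generically has \emph{positive} Lelong numbers along the divisorial part of the non-ample locus, the function $u$ itself has positive Lelong numbers there. Hence $\int_X e^{-Mu}\omega_X^n=\infty$ once $M$ exceeds the Skoda threshold, and your bound is vacuous for large $M$. The further step — passing from $e^{-Mu}\in L^1_{\mathrm{loc}}(\mathrm{Amp}(\theta))$ plus ``$X\setminus\mathrm{Amp}(\theta)$ has measure zero'' to a global integral — is also not valid: $L^1_{\mathrm{loc}}$ on a non-compact open set together with a measure-zero complement gives no global integrability (think of $|z|^{-M}$ on $\mathbb{D}\setminus\{0\}$ with $M\geq 2$). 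And ``$V_\theta=-\infty$ on the complement makes the integrand vanish'' is not available either, since $u\leq V_\theta$ forces $u=-\infty$ there too, so $V_\theta-u$ is indeterminate rather than $-\infty$. Finally, even the claim that $u$ and $V_\theta$ share the same Lelong numbers — which is in fact true — would not be enough, because equality of Lelong numbers says nothing about whether the \emph{difference} $V_\theta-u$ stays bounded or has integrable exponential.

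The paper's proof goes around all of this by working directly with the difference. The key input is \cite[Proposition 2.10]{dang2021continuity}: for any $b>0$, the envelope $P_{\omega_X}\bigl(b(u-V_\theta)\bigr)$ belongs to $\mathcal{E}(X,\omega_X)$. Since this is a genuine $\omega_X$-psh function with full Monge--Amp\`ere mass in a K\"ahler class, it has zero Lelong numbers everywhere, so the uniform Skoda theorem applies to it at all exponents. Because $V_\theta-u\leq -P_{\omega_X}(u-V_\theta)$ pointwise (as $P_{\omega_X}(u-V_\theta)$ lies below $u-V_\theta$), one gets $\int_X e^{M(V_\theta-u)}\omega_X^n<\infty$ for all $M$, and then Proposition \ref{prop exp integrable} finishes. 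To fix your proof you would need to replace the crude $V_\theta-u\leq -u$ bound by this envelope argument (or something doing the same job of extracting an $\omega_X$-psh minorant of $u-V_\theta$ with zero Lelong numbers), which is where the real content of the lemma lives.
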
 
 \begin{proof} 
     It follows from~\cite[Proposition 2.10]{dang2021continuity} that for any $b>0$, $P_{\omega_X}(b(u-V_\theta))\in\mathcal{E}(X,\omega_X)$. We remark here that $u-V_\theta$ is well-defined outside a pluripolar set. 
     In particular, for $0<\gamma<\beta$, we have
     \[\int_X e^{m\gamma^{-1}(V_\theta-u)}\omega_X^n\leq \int_X e^{-m\gamma^{-1}P_{\omega_X}(u-V_\theta)}\omega_X^n<+\infty, \]
     as follows from Skoda's integrability theorem; cf.~\cite[Theorem 2.50]{guedj2017degenerate}. We, therefore, apply Proposition~\ref{prop exp integrable} to conclude.
 \end{proof}

\begin{proposition}\label{the exp cap}
	Let $\phi\in \PSH(X, \theta)$ be a model potential with $\int_X\theta_{\phi}^n>\varrho>0$. 
    Let $\mu\in\mathcal{M}(B,\beta)$.
    Then, there exist constants
	$0<\gamma<1$ and $C>0$ depending only on $X, \omega_X, \theta, B$ and $\beta$ such that
	\begin{equation}\label{eq0 the exp cap}
		\mu (E)\leq C\,
		\exp\left(-\gamma\left(\dfrac{\varrho}{\capacity_{\phi}(E)}\right)^{1/n} \right),
	\end{equation}
	for all Borel sets $E\subset X$. 
\end{proposition}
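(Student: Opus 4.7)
My plan is to combine an Alexander--Taylor type inequality, adapted to the model potential $\phi$, with the exponential integrability transfer supplied by Proposition~\ref{prop exp integrable}, and to conclude via a Chebyshev estimate.

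For a fixed Borel set $E \subset X$ with $\capacity_{\phi}(E) > 0$, the central step is to construct a $\theta$-psh function $u_{E}$, normalized so that $\sup_{X} u_{E} = 0$, satisfying quasi-everywhere on $E$
\[u_{E} \;\leq\; -\,c_{0}\,\bigl(\varrho/\capacity_{\phi}(E)\bigr)^{1/n},\]
with a universal constant $c_{0}>0$ depending only on $X,\omega_{X},\theta$. The natural candidate is a rescaling of the relative extremal function
\[h^{*}_{E,\phi} \;=\; \bigl(\sup\{u \in \PSH(X,\theta) : u \leq \phi,\; u \leq \phi - 1\text{ on }E\}\bigr)^{*},\]
which satisfies $\phi - 1 \leq h^{*}_{E,\phi} \leq \phi$ with equality to $\phi - 1$ quasi-everywhere on $E$. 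I would then exploit the rooftop property $\phi = P_{\theta}[\phi]$ and the hypothesis $\int_{X}\theta_{\phi}^{n} > \varrho$ to extract a quantitative relation between $\sup_X(\phi - h^{*}_{E,\phi})$ and $\capacity_{\phi}(E)$ via the comparison principle for non-pluripolar products; iterating the construction at the appropriate scale would produce the sought $u_{E}$.

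Next, since the family $\{v \in \PSH(X,\theta) : \sup_{X} v = 0\}$ is $L^{1}(X,\omega_{X}^{n})$-compact, Skoda's integrability theorem provides constants $m, C_{0} > 0$ depending only on $X,\omega_{X},\theta$ such that
\[\int_{X} e^{m(V_{\theta} - v)}\,\omega_{X}^{n} \;\leq\; \int_{X} e^{-mv}\,\omega_{X}^{n} \;\leq\; C_{0}\]
for every such $v$; the first inequality uses $V_{\theta} \leq 0$. Applying Proposition~\ref{prop exp integrable} with the pair $(V_{\theta},u_{E})$ and fixing any $0 < \eta < \beta$, I obtain
\[\int_{X} e^{\eta m\,(V_{\theta} - u_{E})}\, d\mu \;\leq\; C_{1},\]
with $C_{1}$ depending only on $X,\omega_{X},\theta,B,\beta,\eta$.

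To conclude, I use that $\mu$ does not charge pluripolar sets to restrict $E$ to a compactum inside the ample locus where $V_{\theta} \geq -C_{X}$. The Alexander--Taylor estimate yields $V_{\theta} - u_{E} \geq -C_{X} + c_{0}(\varrho/\capacity_{\phi}(E))^{1/n}$ on $E$, and for $\capacity_{\phi}(E)$ small enough that the right-hand side exceeds $\tfrac{1}{2}c_{0}(\varrho/\capacity_{\phi}(E))^{1/n}$, Chebyshev's inequality gives
\[\mu(E) \;\leq\; e^{-\tfrac{\eta m c_{0}}{2}(\varrho/\capacity_{\phi}(E))^{1/n}}\int_{X} e^{\eta m(V_{\theta} - u_{E})}\,d\mu \;\leq\; C\,\exp\!\Bigl(-\gamma\bigl(\varrho/\capacity_{\phi}(E)\bigr)^{1/n}\Bigr),\]
for $\gamma = \tfrac{1}{2}\eta m c_{0}$; for larger capacities the bound is trivial after adjusting $C$. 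The main obstacle will be establishing the Alexander--Taylor inequality with explicit dependence on $\varrho$ for general model potentials $\phi$, which requires exploiting the rooftop characterization $\phi = P_{\theta}[\phi]$ carefully together with the non-pluripolar comparison principle; a secondary technical point is controlling $V_{\theta}$ near $\Sing(\theta)$, resolved by localizing to the ample locus.
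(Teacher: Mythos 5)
Your overall architecture---an Alexander--Taylor type inequality for an extremal function attached to $(E,\theta,\phi)$, exponential integrability of the H\"older measure against normalized potentials, then Chebyshev---is the same as the paper's, which works with the global $\phi$-extremal function $V_{E,\phi}$, quotes the capacity bound $\sup_X V^*_{E,\phi}\geq (\varrho/\capacity_\phi(E))^{1/n}$ from \cite[Lemma 4.9]{darvas2018monotonicity}, and uses the weak moderateness of $\mu$ from \cite{dinh2014characterization}. However, two of your steps do not go through as written. First, the construction of $u_E$: the object you propose as the ``natural candidate,'' the relative extremal function $h^*_{E,\phi}$ pinched between $\phi-1$ and $\phi$, can never produce the required depth $-c_0(\varrho/\capacity_\phi(E))^{1/n}$ on $E$, since $\sup_X(\phi-h^*_{E,\phi})\leq 1$ for \emph{every} $E$. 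The relevant object is the global extremal function $V_{E,\phi}=\sup\{u\in\PSH(X,\theta):u\leq\phi\text{ on }E\}$, whose supremum $M_\phi(E)$ is what blows up as $\capacity_\phi(E)\to 0$, and the inequality $M_\phi(E)\geq(\varrho/\capacity_\phi(E))^{1/n}$ is exactly the nontrivial quantitative input; your plan to ``iterate the construction at the appropriate scale'' is not a proof of it, and you yourself flag it as the main obstacle. This step should either be proved or cited.

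Second, the Chebyshev step is not uniform in $E$. You need a lower bound for $V_\theta-u_E$ on $E$, and you obtain one only after ``restricting $E$ to a compactum inside the ample locus where $V_\theta\geq -C_X$.'' But $E$ is an arbitrary Borel set and $V_\theta$ is unbounded below outside compact subsets of $\mathrm{Amp}(\theta)$ (unless $\theta$ is semi-positive), so exhausting $E$ by such compacta makes $C_X$, and hence your final constant $C$, blow up; the resulting estimate is not the uniform bound claimed in the proposition. The repair is to remove $V_\theta$ from the argument entirely: since $\sup_X u_E=0$ implies $\int_X u_E\,\omega_X^n\geq -C_{\omega_X}$, the weak moderateness of H\"older measures (Lemma~\ref{lem DN14 def holder}, or \cite[Proposition 4.4]{dinh2014characterization}) gives $\int_X e^{-\alpha u_E}\,d\mu\leq C$ with $C$ depending only on $X,\omega_X,\theta,B,\beta$, and then $-u_E\geq c_0(\varrho/\capacity_\phi(E))^{1/n}$ quasi-everywhere on $E$, hence $\mu$-a.e.\ on $E$ because $\mu$ charges no pluripolar set, yields the conclusion by Chebyshev. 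This single-potential route is precisely what the paper's proof does with $V^*_{E,\phi}$, and it avoids both the detour through Proposition~\ref{prop exp integrable} and the problematic localization to the ample locus.
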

\begin{proof} The case when $\theta=\omega_X$ and $\phi=0$ was shown in \cite[Propositions 2.4, 4.4]{dinh2014characterization}. For the relative case,
   the proof relies on the arguments in~\cite[Proposition 4.30]{darvas2018monotonicity}, so we sketch it here. 
   According to~\cite[Section 4A2]{darvas2018monotonicity}, for any Borel set $E$, we define the global $\phi$-extremal function of $(E,\theta,\phi)$ by
   \[V_{E,\phi}=\sup\{ u\in\PSH(X,\theta,\phi): u\leq \phi \;\text{on}\, E\}. \]
 Set $M_\phi(E)=\sup_X V^*_{E,\phi}$, we may assume $M_\phi(E)\geq 1$.  Thanks to~\cite[Lemma 4.9]{darvas2018monotonicity}, we have the Alexander--Taylor type inequality
 \[ \left(\frac{\varrho}{\capacity_{\phi}(E)}\right)^{1/n} \leq M_\phi(E),\]
 hence,
 \[ \exp(-M_\phi(E))\leq \exp\left(-\left(\frac{\varrho}{\capacity_{\phi}(E)}\right)^{1/n} \right). \] 
  As follows from~\cite[Proposition 4.4]{dinh2014characterization}, $\mu$ is weakly moderate, i.e. there are constants $\alpha=\alpha(\beta)>0$ and $C=C(B,\beta)>0$ such that $\int_X \exp(-\alpha \varphi)d\mu\leq C$ for every $\varphi\in\mathcal 
   C$. We apply this to $V_{E,\phi}^*-\int_X V_{E,\phi}^*\omega_X^n$ to obtain 
   \[\int_X\exp(-\alpha V_{E,\phi}^*)d\mu\leq C\cdot \exp\bigg(-\alpha\int_XV_{E,\phi}^*\omega_X^n\bigg). \]
   Since $V_{E,\phi}^*\leq 0$ on $E$ a.e. and $\int_X V_{E,\phi}^*{\omega_X}^n\geq M_\phi(E)-C_{\omega_X}$ (see, e.g.,~\cite[Proposition 8.5]{guedj2017degenerate}) we have
   \[ \mu(E)\leq C\cdot\exp\left(-\gamma\left(\frac{\varrho}{\capacity_{\phi}(E)}\right)^{1/n} \right),\] for $\gamma>0$. 
\end{proof}

\begin{proposition} \label{prop: unif-capa}
Fix $a_0>0$, $ C_0>0$ and $h: (0, \infty)\rightarrow (0, \infty)$ is an increasing concave function with $h(\infty)=\infty$.
Let $\mathcal{N}=\mathcal{N}(B,\beta, a_0,C_0,h)$ be the set of probability measures $\nu$ on $X$ such that $\nu=e^{-\psi}\mu$ with $\mu\in\mathcal{M}(B,\beta)$, $\psi\in\PSH(X, a_0\omega_X)$ and  \begin{equation}\label{eq: assumption H}
    \int_Xh(-\psi)e^{-\psi}{\rm d}\mu\leq C_0.
\end{equation}  
Then there exists a continuous function $g\colon [0,\infty)\to [0,\infty)$ with $g(0)=0$ such that for all Borel sets $E$,
\[\nu(E)\leq g(\capacity_{\omega_X}(E)) \quad\text{for all}\,\nu\in\mathcal{N}.\]
In particular, the family of measures $(\nu)_{\nu\in\mathcal{N}}$ is uniformly absolutely
continuous with respect to capacity.
\end{proposition}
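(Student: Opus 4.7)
The strategy is to split $\nu(E) = \int_E e^{-\psi}\,d\mu$ at a sublevel threshold of $\psi$, then bound the low-$\psi$ portion by $\mu(E)$ via Proposition \ref{the exp cap} and bound the high-$\psi$ tail by a Chebyshev-type inequality against $h(-\psi)e^{-\psi}$. The final function $g$ will be obtained by balancing the threshold against $\capacity_{\omega_X}(E)$.

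Concretely, for any threshold $M>0$ (to be chosen later) we would write
\begin{equation*}
\nu(E) \;\leq\; e^{M}\mu(E) \;+\; \int_{\{-\psi>M\}} e^{-\psi}\,d\mu.
\end{equation*}
For the first term, the zero function is a model potential for $\omega_X$ with $\int_X\omega_X^n=\Vol(\omega_X)>0$, and $\capacity_0 = \capacity_{\omega_X}$, so Proposition \ref{the exp cap} yields constants $C,\gamma>0$ depending only on $X,\omega_X,B,\beta$ such that
\begin{equation*}
\mu(E) \;\leq\; C\exp\!\left(-\gamma\bigl(\Vol(\omega_X)/\capacity_{\omega_X}(E)\bigr)^{1/n}\right)
\end{equation*}
uniformly for $\mu\in\mathcal M(B,\beta)$. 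For the tail, since $h$ is increasing,
\begin{equation*}
\int_{\{-\psi>M\}} e^{-\psi}\,d\mu \;\leq\; \frac{1}{h(M)}\int_X h(-\psi)e^{-\psi}\,d\mu \;\leq\; \frac{C_0}{h(M)}.
\end{equation*}

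Setting $t=\capacity_{\omega_X}(E)$ and choosing $M(t)=\tfrac{\gamma}{2}\bigl(\Vol(\omega_X)/t\bigr)^{1/n}$, the first term collapses to $Ce^{-M(t)}$, so it is natural to define
\begin{equation*}
g(t)\;:=\;Ce^{-M(t)} + \frac{C_0}{h(M(t))}\quad(t>0),\qquad g(0):=0.
\end{equation*}
Since $M(t)\to\infty$ as $t\to 0^+$ and $h(\infty)=\infty$, both summands vanish in the limit, giving continuity at $0$; continuity on $(0,\infty)$ follows from that of $h$ (a concave function on an open interval is automatically continuous). The estimate $\nu(E)\leq g(\capacity_{\omega_X}(E))$ then holds uniformly for $\nu\in\mathcal{N}$, and the ``in particular'' assertion about uniform absolute continuity follows at once from $g(0)=0$.

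There is no deep obstacle here: the real analytic content has been packaged into Proposition \ref{the exp cap}, and what remains is an elementary balancing. The only points requiring attention are (i) verifying that the constants $C,\gamma$ entering $g$ depend only on $X,\omega_X,B,\beta$ (so $g$ is truly uniform over the family $\mathcal N$), and (ii) noting that the hypothesis $\psi\in\PSH(X,a_0\omega_X)$ is used only implicitly, to guarantee that $\psi$ is quasi-psh and the sublevel decomposition $\{-\psi>M\}$ is Borel and well-behaved; the quantitative dependence on $a_0$ is vacuous in this proposition.
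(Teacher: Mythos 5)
Your proof is correct and follows essentially the same route as the paper: the same sublevel decomposition of $\int_E e^{-\psi}\,d\mu$, the same use of Proposition~\ref{the exp cap} on the $\mu(E)$ term and of the Chebyshev bound $C_0/h(M)$ on the tail, differing only in the choice of threshold (the paper takes $k=\log(\capacity_{\omega_X}(E)^{-1/2})$ and treats $\capacity_{\omega_X}(E)\geq 1$ separately, whereas your $M(t)=\tfrac{\gamma}{2}(\Vol(\omega_X)/t)^{1/n}$ avoids that case distinction).
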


\begin{proof}
    For any Borel subset $E\subset X$, if $\capacity_{\omega_X}(E)=0$, then $E$ is a pluripolar set, and consequently $\nu(E)=0$. In this case, the desired inequality holds trivially.
We may therefore assume $\capacity_{\omega_X}(E)>0$. We have for $k>0$
    \begin{align*}
        \int_E e^{-\psi}d\mu&=\int_{E\cap\{\psi\geq -k \} } e^{-\psi}d\mu+\int_{E\cap \{\psi<-k \}}e^{-\psi}d\mu\\
        &\leq e^k\mu(E)+ \frac{1}{h(k)}\int_{E\cap \{\psi<-k \}}h(-\psi)e^{-\psi}d\mu\\
        &\leq C(e^k\capacity_{\omega_X}(E)+{h(k)}^{-1}),
    \end{align*}
    where the last inequality follows from Proposition~\ref{the exp cap}. 
    If $\capacity_{\omega_X}(E)=0$ then $E$ is a pluripolar set, so $\nu(E)=0$, the desired inequality is trivial.
 Otherwise, taking $k\coloneqq\log \frac{1}{\sqrt{\capacity_{\omega_X}(E)}}>0$ we get that $e^{-\psi}\mu(E)\leq g(\capacity_{\omega_X}(E))$ where
    \[ g(t)\coloneqq C(t^{1/2}+h(\log (t^{-1/2}))^{-1}).\]
     Otherwise, since $\frac{1}{\sqrt{\capacity_{\omega_X}(E)}}\leq 1$, the choice of $g$ ensures that $\nu(E)\leq C\leq g(\capacity_{\omega_X}(E))$.
\end{proof}
\begin{remark}\label{rmk: hC}
    For each non-pluripolar measure $e^{-\psi}\mu$ with $\int_Xe^{-\psi}d\mu<\infty$, there always exists a concave increasing function $h\colon \mathbb{R}^+
    \to\mathbb R^+$ and $C>0$ such that $\int_X h(-\psi)e^{-\psi}d\mu\leq C$ as follows from~\cite[Lemma 3.3]{boucksom2010monge}. 
\end{remark}
We end this section with some examples of H\"older continuous measures.
\begin{example}
   Dinh and Nguy\^en~\cite{dinh2014characterization} gave a characterization of H"older continuous measures in the sense of Definition~\ref{def: holder}. Specifically, they proved that a measure $\mu$ is H\"older continuous if and only if it can be expressed as the Monge--Amp`ere measure of a H\"older continuous $\omega_X$-psh function, i.e., $\mu = (\omega_X + \ddc\varphi)^n$ for some $\varphi\in\PSH(X,\omega_X)$ that is H\"older continuous. As a consequence, one obtains explicit examples of such measures; several are listed below.
    \begin{itemize}
        \item $\mu=f\nu$ where $\nu$ is a H\"older continuous measure and $f\in L^p(\nu)$ for some $p>1$; cf.~\cite{kolodziej2008holder,demailly2014holder}. 
        \item If there are constants $C>0$ and $\alpha>0$ such that $\mu(B(z,r))\leq Cr^{2n-2+\alpha}$ for $B(z,r)$ denoting the ball centered at $z$ with radius $r>0$, then $\mu$ is H\"older continuous; cf.~\cite{hiep2010holder,demailly2014holder}.
        \item If $\mu$ is a positive Radon measure compactly supported on a immersed $\mathcal{C}^3$ real submanifold $K$ of $X$ of real codimension $d>0$, then $\mu$ is H\"older continuous; cf.~\cite{Vu2018complex}.
    \end{itemize}
\end{example}
\begin{example} We construct an example of a measure with \emph{radial singularities}, inspired by~\cite[Section 1.3]{di2021finite} and~\cite[Section 5]{guedj2025-diameter}.
Let $X=\mathbb C\mathbb P^n$ be a complex projective manifold of dimension $n$, equipped with
the Fubini--Study metric $\omega_X=\omega_{\rm FS}$. We assume that ${\varphi}$ is a $\omega_{\rm FS}$-psh function on $X$ which has a radial singularity at a point $p$, i.e., it is invariant under the group $U(n,\mathbb C)$ in the neighborhood of $p$. Choosing a local chart biholomorphic to the unit ball $B\subset \mathbb{C}^n$, with $p$ corresponding to the origin. Locally, the function $\varphi$ can be written as $\varphi=u-\frac{1}{2}\log[1+\|z\|^2]$ for some psh function $u$.

We therefore consider a psh function $u\coloneqq\chi\circ L$ defined near the origin in $\mathbb C^n$, where $L(z)\coloneqq\log \|z\|$ and $\chi\colon\mathbb R^-\to \mathbb R^- $ is a convex increasing function. A computation shows that
\[ (\ddc u)^n=\frac{c_n (\chi'\circ L)^{n-1}\chi''\circ L}{\|z\|^{2n}}\omega_X^n=\colon e^{-\psi_\chi}\omega_X^n. \]
To simplify later estimates, we assume that $\chi(t)$ does not go to zero too fast at infinity $-\infty$, that is, $\chi'(t),\chi''(t)\geq e^{Ct}$ near $t=-\infty$. Under this hypothesis, one finds $\psi_\chi\sim \log\|z\|$ near the origin. Hence,
the function $h$ satisfies $\int_X h(-\psi_\chi)e^{-\psi_\chi}\omega_X^n<\infty $ if and only if
\[\int_{-\infty}^{-1} h(-t)\chi'(t)^{n-1}\chi''(t)dt<\infty.\]
Below, we provide several families of functions $\chi$ for which the condition holds with $h(s)=s^\varepsilon$, where $0<\varepsilon\ll 1$.
\begin{enumerate}
    \item Consider $\chi_a(t)=(-t)^a$ for $a\in (0,1)$.  We have
    \[\int_{-\infty}^{-1}h(-t)\chi'(t)^{n-1}\chi''(t)dt\lesssim\int_{-\infty}^{-1} h(-t)\cdot (-t)^{-n(1-a)-1}dt<\infty, \]
    for $h(s)=s^\varepsilon$ with $0<\varepsilon<<1$ and $a<\frac{n-\varepsilon}{n}$.
    \item Consider $\chi_a(t)=-(\log (-t))^a$, where $a>0$. We have 
  \[\int_{-\infty}^{-1}h(-t)\chi'(t)^{n-1}\chi''(t)dt\lesssim\int_{-\infty}^{-1} h(-t)\cdot (-t)^{-n-1}(\log(-t))^{na-n}dt<\infty,\]
  for $h(s)=s^\varepsilon$ with $0<\varepsilon<<1$.
  \item $\chi(t)=-\log(\log(-t))$. We have
  \[\int_{-\infty}^{-1}h(-t)\chi'(t)^{n-1}\chi''(t)dt\lesssim\int_{-\infty}^{-1} h(-t)\cdot (-t)^{-n-1}(\log(-t))^{-n}dt<\infty,\]
   for $h(s)=s^\varepsilon$ with $0<\varepsilon<<1$.
\end{enumerate}
\end{example}

\section{Proof of the main theorem}
 We prove some lemmas which will be used to prove the main theorem.


\begin{lemma}\label{lem t^n cap}
	Assume $u, v$ are negative $\theta$-psh functions such that
	\begin{itemize}
		\item $v\leq P_{\theta}[u]$;
		\item $v=\lambda v_1+(1-\lambda) v_2$, where  $v_1, v_2$ are $\theta$-psh functions
		with $v_2$ having model singularity type and $\lambda\in (0, 1)$. 
	\end{itemize}
Then, for every $s>0$ and $0\leq t\leq 1-\lambda$, we have
	$$t^n \capacity_{v_2}\{u<v-s- t\}\leq \int_{\{u<v-s\}}\theta_u^n.$$
\end{lemma}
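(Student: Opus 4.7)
The natural strategy is to unpack the definition of $\capacity_{v_2}(\{u<v-s-t\})$ by testing against an arbitrary $\phi\in\PSH(X,\theta)$ with $v_2-1\leq\phi\leq v_2$, proving the bound
$$t^n\int_{\{u<v-s-t\}}\theta_\phi^n\leq \int_{\{u<v-s\}}\theta_u^n,$$
and then taking the supremum over such $\phi$.

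The key construction is the interpolant
$$w\coloneqq (1-\lambda)v_1+(\lambda-t)v_2+t\phi.$$
Since the three weights are non-negative and sum to $1$, $w$ lies in $\PSH(X,\theta)$. The bound $v_2-1\leq\phi\leq v_2$ yields $v-t\leq w\leq v$, and hence the set inclusions
$$\{u<v-s-t\}\subset\{u<w-s\}\subset\{u<v-s\}.$$
Writing $\theta_w=(1-\lambda)\theta_{v_1}+(\lambda-t)\theta_{v_2}+t\theta_\phi$ and invoking multilinearity of the non-pluripolar product over positive $(1,1)$-currents, the multinomial expansion of $\theta_w^n$ is a sum of non-negative terms; retaining only the purely $\phi$-contribution gives the crucial pointwise inequality
$$\theta_w^n\geq t^n\,\theta_\phi^n.$$

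With these ingredients the proof reduces to the chain
\begin{align*}
t^n\int_{\{u<v-s-t\}}\theta_\phi^n&\leq t^n\int_{\{u<w-s\}}\theta_\phi^n\leq \int_{\{u<w-s\}}\theta_w^n\\
&\leq \int_{\{u<w-s\}}\theta_u^n\leq \int_{\{u<v-s\}}\theta_u^n,
\end{align*}
whose only non-trivial step is the middle inequality, an instance of the comparison principle applied to $(u,w-s)$ (noting $\theta_{w-s}^n=\theta_w^n$). The hypothesis $v\leq P_\theta[u]$ becomes essential here: since $w\leq v\leq P_\theta[u]$, the function $w$ is at least as singular as $u$ in the envelope sense, placing the pair $(u,w-s)$ exactly in the regime where the generalized comparison principle in big cohomology classes of Darvas--Di Nezza--Lu (cf.~\cite{darvas2018monotonicity}) applies.

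The main anticipated obstacle is justifying this comparison step rigorously, since $u$ and $w$ are both unbounded $\theta$-psh functions with non-trivial singular loci. A standard workaround is to truncate, replacing $u,w$ by $u_N\coloneqq\max(u,V_\theta-N)$ and $w_N\coloneqq\max(w,V_\theta-N)$, apply the classical Bedford--Taylor comparison on the resulting bounded potentials in the ample locus, and then pass to the limit $N\to\infty$ using the canonical approximation and the monotonicity properties of non-pluripolar products recorded in~\cite{boucksom2010monge,darvas2018monotonicity}.
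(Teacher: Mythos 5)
Your proof is correct and takes essentially the same approach as the paper's: both test the capacity against an arbitrary $\phi\in\PSH(X,\theta)$ with $v_2-1\leq\phi\leq v_2$, introduce the three-term convex interpolant (your $w=(1-\lambda)v_1+(\lambda-t)v_2+t\phi$ is the paper's $\widehat{v}=(1-\lambda)v_1+(1-t)\lambda v_2+\lambda t\varphi$ under the reparametrization $t\mapsto\lambda t$), establish the same set inclusions $\{u<v-s-t\}\subset\{u<w-s\}\subset\{u<v-s\}$, retain the $t^n\theta_\phi^n$ term from the multinomial expansion of $\theta_w^n$, and apply the generalized comparison principle using $w\leq v\leq P_\theta[u]$, before taking the supremum over $\phi$.
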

\begin{proof} The proof follows closely that of \cite[Lemma 4.31]{darvas2018monotonicity}, which itself originates in \cite[Lemma 4.2]{boucksom2010monge}.
	 Let $\varphi$ be a $\theta$-psh function such that $v_2-1\leq\varphi\leq v_2$. For every
	 $s>0$ and $0\leq t\leq 1$, we have
	 \begin{align*}
	 	\{u<v-s-(1-\lambda) t\}\subset \{u< \lambda v_1+(1-t)(1-\lambda) v_2+(1-\lambda) t\varphi-s \}
	 	 \subset \{u<v-s\}.
	 \end{align*}
 Setting $\widehat{v}:=\lambda v_1+(1-t)(1-\lambda) v_2+(1-\lambda) t\varphi$, we have
 \begin{align*}
 	(1-\lambda)^nt^n\int_{\{u<v-s-(1-\lambda) t\}}\theta_{\varphi}^n
 	\leq \int_{\{u<v-s-(1-\lambda) t\}}\theta_{\widehat{v}}^n
 	\leq \int_{\{u<\widehat{v}-s\}}\theta_{\widehat{v}}^n,
 \end{align*}
for all $s>0$ and $0\leq t\leq 1$. Since $\widehat{v}\leq P_\theta[u]$
it follows from the comparison principle
 \cite[Lemma 2.3]{darvas2021log} that 
 \begin{align*}
   \int_{\{u<\widehat{v}-s\}}\theta_{\widehat{v}}^n  \leq \int_{\{u<\widehat{v}-s\}}\theta_{u}^n
 	\leq \int_{\{u<v-s\}}\theta_{u}^n.
 \end{align*}
 Since $\varphi$ was taken arbitrarily, it follows that
 	$$(1-\lambda)^nt^n \capacity_{v_2}\{u<v-s-(1-\lambda) t\}\leq \int_{\{u<v-s\}}\theta_u^n,$$
for all $s>0$ and $0\leq t\leq 1$. Substituting $t\mapsto (1-\lambda) t$, we obtain the desired inequality.
\end{proof}
\begin{lemma}\label{lem tg(t+s)}
	Let $g: \R^+\rightarrow\R^+$ be a non-increasing, right-continuous function such that, 
	\begin{align*}
		tg(t+s)\leq C(g(s))^{1+\alpha},
	\end{align*}
for every $s>0$ and $0\leq t\leq 1-\lambda$, where $C>0$, $\alpha> 0$ and $\lambda\in (0, 1)$ are given.
Assume that there exist $S_0>0$ and $0\leq t_0\leq 1-\lambda$ satisfying $g(S_0)^{\alpha}<\frac{t_0}{2C^{1/n}}$.
Then $g\left(S\right)=0$ for all $S\geq S_0+\frac{t_0}{1-2^{-\alpha}}$.
\end{lemma}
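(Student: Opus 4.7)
The plan is to run a standard De~Giorgi iteration, calibrated so that the geometric decay of $g$ is synchronized with a geometric schedule of increments whose total equals $\tfrac{t_0}{1-2^{-\alpha}}$. First I would set $S_{k+1}\coloneqq S_k+t_k$ starting from the given $S_0$, with increments
\[
t_k\coloneqq t_0\cdot 2^{-k\alpha},\qquad k\geq 0.
\]
Since $\alpha\geq 0$, each $t_k$ satisfies $0<t_k\leq t_0\leq \lambda$, so the structural hypothesis $t_k\,g(S_k+t_k)\leq C\,g(S_k)^{1+\alpha}$ is applicable at every step. Moreover, $S_k\nearrow S_\infty\coloneqq S_0+\tfrac{t_0}{1-2^{-\alpha}}$.

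Next I would prove by induction that $g(S_k)\leq g(S_0)\cdot 2^{-k}$ for all $k\geq 0$. The base case is trivial. For the inductive step, the hypothesis combined with the inductive assumption gives
\[
g(S_{k+1})\leq\frac{C\,g(S_k)^{1+\alpha}}{t_k}\leq \frac{C\,g(S_0)^{1+\alpha}\,2^{-k(1+\alpha)}}{t_0\,2^{-k\alpha}}=\frac{C\,g(S_0)^\alpha}{t_0}\cdot g(S_0)\cdot 2^{-k}.
\]
By the smallness assumption $g(S_0)^\alpha<\tfrac{t_0}{2C}$, the prefactor is at most $1/2$, so $g(S_{k+1})\leq g(S_0)\cdot 2^{-(k+1)}$, closing the induction.

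Finally, passing to the limit $k\to\infty$ yields $g(S_k)\to 0$ with $S_k\nearrow S_\infty$. Since $g$ is non-increasing, for any $S>S_\infty$ we have $S_k<S$ for $k$ large enough, whence $g(S)\leq g(S_k)\to 0$, so $g(S)=0$. Right-continuity of $g$ then forces $g(S_\infty)=\lim_{S\to S_\infty^+}g(S)=0$ as well, so the vanishing holds on the whole half-line $[S_\infty,\infty)$, which is exactly the claim.

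The only nontrivial choice is the schedule $t_k=t_0\cdot 2^{-k\alpha}$: this is precisely the geometric decay whose total sum reproduces the constant $\tfrac{t_0}{1-2^{-\alpha}}$ appearing in the conclusion, and whose exponent $k\alpha$ cancels the $2^{-k\alpha}$ generated when the recursion $g(S_{k+1})\leq C\,g(S_k)^{1+\alpha}/t_k$ is iterated against the ansatz $g(S_k)\leq g(S_0)\cdot 2^{-k}$. The other matter requiring a moment of care is the endpoint $S=S_\infty$, which is handled by right-continuity; everything else is routine bookkeeping.
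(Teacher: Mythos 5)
Your iteration is correct and is essentially the standard De Giorgi argument that the paper itself invokes by citation to Eyssidieux--Guedj--Zeriahi (pp.\ 614--615): the geometric schedule $t_k=t_0 2^{-k\alpha}$, the induction $g(S_k)\leq g(S_0)2^{-k}$ closed by the smallness hypothesis $g(S_0)^\alpha<\tfrac{t_0}{2C}$, and monotonicity plus right-continuity at the endpoint. (As stated the lemma is vacuous for $\alpha=0$, where $\tfrac{t_0}{1-2^{-\alpha}}$ is undefined, so your implicit assumption $\alpha>0$ costs nothing.)
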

\begin{proof}
     We refer to \cite[pages 614–615]{eyssidieux2009singular} for a proof.
\end{proof}
\begin{lemma}\label{lem DL17}
	Let $u\in \mathcal{E} (X, \theta)$ such that $\theta_u^n=e^{-\psi}d\mu$, where $\mu$ is a H\"older continuous measure with H\"older constant $B$ and H\"older exponent $0<\beta\leq 1$ on $\mathcal{C}$
	and $\psi$ is a negative quasi-psh function. Suppose that $v$ is a $\theta$-psh function such that
	$v\leq V_{\theta}+a\psi$ for some $a>0$. Assume that $\int_X\theta_{V_{\theta}}^n>\varrho>0$, where $\varrho$ is a constant. Then,  there exists $C=C(n, X, \omega_X, \theta, \varrho, B, \beta)>0$ such that,
	for every $0<\lambda\leq  1/2$,
	\begin{align*}
		u\geq  \lambda v+(1-\lambda) V_{\theta}-C a\lambda \left(1+\log_+\int_Xe^{\frac{2(V_\theta-u)}{a\lambda}}d\mu\right)-2.
	\end{align*}
\end{lemma}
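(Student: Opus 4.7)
The plan is to reduce the claim to an $L^\infty$ estimate on the sub-level sets of $\phi_\lambda - u$ via a capacity-based De Giorgi iteration, where
\[\phi_\lambda := \lambda v + (1-\lambda) V_\theta.\]
By hypothesis $v \leq V_\theta + a\psi$, and since $\psi \leq 0$ the function $\phi_\lambda$ is $\theta$-psh with $\phi_\lambda \leq V_\theta + a\lambda\psi \leq V_\theta$. As $u \in \mathcal{E}(X,\theta)$ one has $P_\theta[u] = V_\theta$, hence $\phi_\lambda \leq P_\theta[u]$. Set $g(s) := \capacity_{V_\theta}(\{u < \phi_\lambda - s\})$ and $H := \int_X e^{2(V_\theta - u)/(a\lambda)} d\mu$; the latter is finite by Lemma~\ref{lem integ phi-u}. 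The goal is to prove $g(S) = 0$ for some $S \leq C a\lambda(1 + \log_+ H) + 2$.

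Applying Lemma~\ref{lem t^n cap} with $v_1 = v$, $v_2 = V_\theta$ and lemma-parameter $1-\lambda$ (permissible since $V_\theta$ has model singularity type), for any $s > 0$ and $0 \leq t \leq 1-\lambda$ one obtains
\[t^n g(s+t) \leq \int_{\{u < \phi_\lambda - s\}} \theta_u^n = \int_{\{u < \phi_\lambda - s\}} e^{-\psi} d\mu.\]
On $\{u < \phi_\lambda - s\}$ the inequality $\phi_\lambda \leq V_\theta + a\lambda\psi$ rearranges to $-\psi \leq (V_\theta - u - s)/(a\lambda)$, so $e^{-\psi} \leq e^{-s/(a\lambda)} e^{(V_\theta - u)/(a\lambda)}$. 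Cauchy--Schwarz with the definition of $H$ then gives
\[t^n g(s+t) \leq e^{-s/(a\lambda)}\, H^{1/2}\, \mu(\{u < \phi_\lambda - s\})^{1/2}.\]
Proposition~\ref{the exp cap} applied to the model potential $V_\theta$ (with $\varrho < \int_X \theta_{V_\theta}^n$) yields the super-polynomial decay $\mu(E) \leq C\exp(-\gamma(\varrho/\capacity_{V_\theta}(E))^{1/n})$, which, combined with the uniform bound $\capacity_{V_\theta}(E) \leq \capacity_{V_\theta}(X)$, upgrades to $\mu(E) \leq C_k \capacity_{V_\theta}(E)^k$ for every fixed $k$. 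Substituting and raising to the $1/n$-th power, with $\tilde g := g^{1/n}$, brings the recursion to the De Giorgi form
\[t\,\tilde g(s+t) \leq C_1\, H^{1/(2n)}\, \tilde g(s)^{1+\alpha}, \qquad \alpha := \tfrac{k-2}{2},\]
valid for $0 \leq t \leq 1 - \lambda$ (the factor $e^{-s/(na\lambda)} \leq 1$ is absorbed).

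Choosing $k$ large so that $\alpha \gg 1$, the constant $t_0/(1 - 2^{-\alpha})$ in Lemma~\ref{lem tg(t+s)} is arbitrarily close to any fixed $t_0 \in (0, 1-\lambda)$. To locate an initial $S_0$ where $\tilde g(S_0)^{\alpha} < t_0/(2C_1 H^{1/(2n)})$, discard the factor $g(s)^{k/2}$ from the recursion and use $\mu(X) < \infty$ to derive $\tilde g(s + t_0) \leq C_2 (H^{1/2} e^{-s/(a\lambda)})^{1/n}$; this drops below the required threshold once $s \geq a\lambda\, C_3(1 + \log_+ H)$. Lemma~\ref{lem tg(t+s)} therefore forces $g(S) = 0$ for every $S \geq C a\lambda(1 + \log_+ H) + 2$, after enlarging constants. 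Since zero $\capacity_{V_\theta}$-capacity implies pluripolar and both $u,\phi_\lambda$ are $\theta$-psh, upper semicontinuity promotes the quasi-everywhere inequality to $u \geq \phi_\lambda - S$ pointwise, which is the stated bound.

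The main obstacle is the careful bookkeeping of constants through Cauchy--Schwarz, the capacity-to-measure estimate and the De Giorgi iteration so as to isolate the logarithmic dependence on $H$ from the additive constant $2$; a secondary delicate point is the justification of $P_\theta[u] = V_\theta$ for $u \in \mathcal{E}(X,\theta)$, which underlies the use of Lemma~\ref{lem t^n cap}.
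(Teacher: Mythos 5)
Your proposal is correct and follows essentially the same route as the paper: the same convex combination $\lambda v+(1-\lambda)V_\theta$, the same application of Lemma~\ref{lem t^n cap} with $v_2=V_\theta$, the same Cauchy--Schwarz step producing the factor $H^{1/2}e^{-s/(a\lambda)}$, the same conversion of $\mu$ to a power of $\capacity_{V_\theta}$ via Proposition~\ref{the exp cap}, and the same De Giorgi iteration (the paper simply fixes $k=4$, i.e.\ $\alpha=1$). Your explicit justifications of $\widehat v\le P_\theta[u]=V_\theta$ and of the upgrade from a quasi-everywhere to an everywhere inequality are points the paper leaves implicit, but they do not change the argument.
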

Here we note that $e^{\frac{2(V_{\theta}-u)}{a\lambda}} \in L^1(\mu)$ by Lemma~\ref{lem integ phi-u}. Denote by $\log_+(x)=\max(\log x,0)$.
\begin{proof}
	Denote $\widehat{v}\coloneqq \lambda v+(1-\lambda) V_{\theta}$.
	 We apply \ref{lem t^n cap} with $v_1=v$ and $v_2=V_\theta$ to obtain that for all $s>0$ and $0\leq t\leq 1-\lambda$,
	\begin{align*}
		t^n\capacity_{\theta}(u<\widehat{v}-s-t)\leq \int_{\{u<\widehat{v}-s\}}\theta_u^n
		=\int_{\{u<\widehat{v}-s\}}e^{-\psi}d\mu.
	\end{align*}
Since $v\leq V_{\theta}+a\psi$, we have $-\psi\leq \frac{V_\theta-\widehat v}{a\lambda}$.
It follows that
\begin{align*}
t^n\capacity_{\theta}(u<\widehat{v}-s-t)&\leq \int_{\{u<\widehat{v}-s\}}e^{\frac{V_{\theta}-\widehat{v}}{a\lambda}}d\mu\\
&\leq \int_{\{u<\widehat{v}-s\}}e^{\frac{V_{\theta}-u-s}{a\lambda}}d\mu.
\end{align*}
Then, by H\"older's inequality, we obtain
\begin{equation}\label{eq1 proof lem DL17}
t^n\capacity_{\theta}(u<\widehat{v}-s-t)\leq 
\left( \int_{X}e^{\frac{2(V_{\theta}-u-s)}{a\lambda}}d\mu\right)^{{1}/{2}}
\left( \int_{\{u<\widehat{v}-s\}}d\mu\right)^{{1}/{2}}.
\end{equation}
Since the measure $\mu$ is H\"older continuous, it follows from Proposition~\ref{the exp cap} that there exists $C_1>0$
depending on $X, \omega_X, \theta, \varrho, B$ and  $\beta$ such that
\begin{center}
	$\mu(K)\leq C_1^{2n} [\capacity_{\theta}(K)]^4,$
\end{center}
for every Borel set $K\subset X$, using that $\exp(e^{-1/x})=O(x^4)$ for all $x>0$. 
Then, by \eqref{eq1 proof lem DL17}, we have
\begin{equation}\label{eq2 proof lem DL17}
t^n\capacity_{\theta}(u<\widehat{v}-s-t)\leq C_1^n e^{\frac{-s}{a\lambda}}\left( \int_{X}e^{\frac{2(V_{\theta}-u)}{a\lambda}}d\mu\right)^{{1}/{2}}
[\capacity_{\theta}(u<\widehat{v}-s)]^2.
\end{equation}
Set $g(s)\coloneqq[\capacity_{\theta}(u<\widehat{v}-s)]^{1/n}$ and
 $C_2\coloneqq\left(\int_{X}e^{\frac{2(V_{\theta}-u)}{a\lambda}}d\mu\right)^{{1}/{2n}}$.
  By \eqref{eq2 proof lem DL17},
 we have
 \begin{equation}\label{eq3 proof lem DL17}
 	tg(t+s)\leq C_1C_2g(s)^2,
 \end{equation}
for every $s>0$ an $0\leq t\leq 1-\lambda$. Moreover, it follows from \eqref{eq1 proof lem DL17} (with $t=1-\lambda$) that
\begin{equation}\label{eq4 proof lem DL17}
	g(s+1-\lambda)\leq\dfrac{C_3}{1-\lambda} e^{\frac{-s}{a\lambda n}},
\end{equation}
for every $s>0$, where $C_3:=C_2(\mu(X))^{1/2n}$. Put
	$s_0=a\lambda n \log_+ (16C_1C_2C_3)$ so that
$$\frac{C_3}{1-\lambda} e^{\frac{-s_0}{a\lambda n}}<\frac{(1-\lambda)}{4C_1C_2}.$$
Then, by \eqref{eq4 proof lem DL17}, we have $g(s_0+1-\lambda)<\frac{(1-\lambda)}{4C_1C_2}$. 
Applying Lemma \ref{lem tg(t+s)} with $S=s_0+1-\lambda$ and $t_0=\frac{1-\lambda}{2}\in (0, 1-\lambda)$, we obtain
$g(s_0+2-2\lambda)=0$. Then
\begin{align*}
	u\geq\widehat{v}-s_0-2+2\lambda\geq \widehat{v}
	-C_4 a\lambda \left(1+\log_+\int_Xe^{\frac{2(V_\theta-u)}{a\lambda}}d\mu\right)-2,
\end{align*}
where $C_4>0$ is a constant depending only on $n, X, \omega_X, \theta, \varrho, B$ and $\beta$.
\end{proof}
\begin{lemma}\label{lem sup<cap}
		Let $u, v$ be negative $\theta$-psh functions such that
	\begin{itemize}
		\item $v\leq u+M$ for some $M\geq 1$;
		\item $v=\lambda v_1+(1-\lambda) v_2$, where  $v_1, v_2$ are $\theta$-psh functions, and $v_2$ has model singularity type and $\lambda\in (0, 1/2)$;
		\item there exist $C>0$ and $\alpha>0$ such that, for every $s>0$,
		\begin{equation}\label{eq0lemsup<cap}
			\int_{\{u<v-s\}}\theta_u^n\leq C[\capacity_{v_2}(u<v-s)]^{1+\alpha}.
		\end{equation}
		\end{itemize}
	Then, for every $\varepsilon>0$,
	\begin{equation*}
		\sup_{X }(v-u)\leq \varepsilon+\dfrac{2MC^{1/n}}{1-2^{-\alpha}}[\capacity_{v_2}(u<v-\varepsilon)]^{\alpha/n}.
	\end{equation*}
\end{lemma}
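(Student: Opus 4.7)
The plan is to run a De Giorgi type iteration on the capacity function
\[
\phi(s)\coloneqq [\capacity_{v_2}(u<v-s)]^{1/n}, \qquad s>0,
\]
relying on three elementary features: $\phi$ is non-increasing in $s$, right-continuous (by continuity from below of capacity along $\bigcup_{s>s_0}\{u<v-s\}=\{u<v-s_0\}$), and vanishes identically on $[M,\infty)$, since the first bullet forces $\{u<v-s\}=\varnothing$ for $s\geq M$.

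The first main step will be to establish the decay inequality
\[
t\,\phi(s+t)\leq C^{1/n}\phi(s)^{1+\alpha}, \qquad s>0,\;0\leq t\leq \lambda.
\]
This is obtained by combining hypothesis (iii) with the shift estimate of Lemma \ref{lem t^n cap} applied at the model potential $v_2$: the latter yields $t^n\capacity_{v_2}(u<v-s-t)\leq \int_{\{u<v-s\}}\theta_u^n$, while (iii) controls the right-hand side by $C\phi(s)^{n(1+\alpha)}$; extracting $n$-th roots gives the claim.

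Set $a\coloneqq\phi(\varepsilon)$ and split into two regimes. If $2C^{1/n}a^\alpha>\lambda$, the target follows at once from the trivial bound $\sup_X(v-u)\leq M$, because in this regime $2MC^{1/n}a^\alpha/\lambda>M$. Otherwise, when $2C^{1/n}a^\alpha\leq \lambda$, I iterate as follows: define $s_0=\varepsilon$ and $s_{k+1}=s_k+t_k$ with the self-adapted step $t_k\coloneqq 2C^{1/n}\phi(s_k)^\alpha$. The decay inequality forces $\phi(s_{k+1})\leq \phi(s_k)/2$, hence by induction $\phi(s_k)\leq a\cdot 2^{-k}$ and $t_k\leq 2C^{1/n}a^\alpha\leq\lambda$ for every $k$, keeping the iteration admissible. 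Summing and using $\alpha\geq 1$,
\[
\sum_{k=0}^\infty t_k\leq \frac{2C^{1/n}a^\alpha}{1-2^{-\alpha}}\leq 4C^{1/n}a^\alpha.
\]
Right-continuity of $\phi$ together with $\phi(s_k)\to 0$ then yields $\phi(s_\infty)=0$ at $s_\infty\coloneqq\varepsilon+\sum_k t_k\leq \varepsilon+4C^{1/n}a^\alpha$, so $\sup_X(v-u)\leq s_\infty$; finally, $M\geq 1$ and $\lambda<1/2$ force $2M/\lambda>4$, which gives the target bound.

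The main obstacle I anticipate is the very first step: Lemma \ref{lem t^n cap} is stated under the hypothesis $v\leq P_\theta[u]$, whereas here one only has $v\leq u+M$. The naive translation $v\mapsto v-M$ restores $v-M\leq u\leq P_\theta[u]$ and preserves the convex decomposition (with $v_2$ still in the second slot), but the resulting shift estimate then only applies on $\{s>M\}$, where $\phi$ is already zero. One will therefore need either to check that, in the intended application of this lemma inside the main proof, the stronger condition $v\leq P_\theta[u]$ holds by construction, or to rerun the comparison-principle argument of \cite{darvas2021log} directly under $v\leq u+M$. Once the decay inequality is secured, the De Giorgi iteration and the two-case split above are standard, and the factor $M/\lambda$ in the final constant emerges naturally from the large-$a$ regime.
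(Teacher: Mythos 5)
Your argument follows the same route as the paper's: the shift estimate of Lemma~\ref{lem t^n cap} combined with hypothesis~\eqref{eq0lemsup<cap} gives the decay inequality $t\,g(s+t)\leq C^{1/n}g(s)^{1+\alpha}$ for $g(s)=[\capacity_{v_2}(u<v-s)]^{1/n}$ and $0\leq t\leq\lambda$, and the two-regime split on whether $2C^{1/n}g(\varepsilon)^{\alpha}$ exceeds $\lambda$ is exactly the paper's case distinction; your hand-rolled iteration with steps $t_k=2C^{1/n}g(s_k)^{\alpha}$ is just an unwinding of the De Giorgi Lemma~\ref{lem tg(t+s)} that the paper cites. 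The one step you leave open --- the applicability of Lemma~\ref{lem t^n cap} when only $v\leq u+M$ is assumed --- is not an obstacle, and the fix is not the downward translation $v\mapsto v-M$ (which, as you note, ruins the sublevel sets) but a direct verification that $v\leq P_\theta[u]$ already holds: since $v$ is $\theta$-psh, $v\leq 0$ and $v\leq u+M$, the function $v$ is a competitor in the envelope defining $P_\theta(\min(u+M,0))$, whence
\[
v\;\leq\;P_\theta\big(\min(u+M,0)\big)\;\leq\;\Big(\lim_{C\to+\infty}P_\theta\big(\min(u+C,0)\big)\Big)^{*}\;=\;P_\theta[u],
\]
using that $C\mapsto P_\theta(\min(u+C,0))$ is non-decreasing. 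With this observation the shift estimate is available for all $s>0$ and your proof is complete; it coincides with the paper's in substance. (The paper applies Lemma~\ref{lem t^n cap} without comment, implicitly relying on the same remark.)
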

\begin{proof} {We adapt the same arguments as in~\cite{eyssidieux2009singular}.}
	By Lemma \ref{lem t^n cap}, for every $s>0$ and $0\leq t\leq 1-\lambda$, we have
		$$t^n \capacity_{v_2}\{u<v-s-t\}\leq \int_{\{u<v-s\}}\theta_u^n.$$
		Therefore, by \eqref{eq0lemsup<cap}, we obtain
		$$t^n \capacity_{v_2}\{u<v-s-t\}\leq C[\capacity_{v_2}(u<v-s)]^{1+\alpha},$$
		for every $s>0$ and $0\leq t\leq 1-\lambda$. Set $g(s)\coloneqq\left[\capacity_{v_2}(u<v-s)\right]^{1/n}$. We have
		\begin{equation*}
			t g(t+s)\leq C^{1/n}  g(s)^{1+\alpha},
		\end{equation*}
	for every $s>0$ and $0\leq t\leq 1-\lambda$.	
	If there exists $0\leq t_0\leq 1$ such that $g(\varepsilon)^{\alpha}=\frac{t_0}{2 C^{1/n}}$, then
	it follows from Lemma \ref{lem tg(t+s)} that $g\left(\varepsilon+\frac{t_0}{1-2^{-\alpha}}\right)=0$. Hence
	$$\sup_X (v-u)\leq \varepsilon+\dfrac{t_0}{1-2^{-\alpha}}\leq\varepsilon+\dfrac{2 C^{1/n}g(\varepsilon)^{\alpha}}{1-2^{-\alpha}},$$
    since $1\leq M$.
	Otherwise $2C^{1/n}g(\varepsilon)^{\alpha}\geq 1$, we infer that
		$$\sup_X(v-u)\leq M 
		\leq\varepsilon+\dfrac{2 M C^{1/n}}{1-2^{-\alpha}}g(\varepsilon)^{\alpha}.$$
This completes the proof.
\end{proof}
\begin{lemma}\label{lem extend GZ12 prop 5.3} Under the assumption of Lemma \ref{lem sup<cap}, assume further that $v_2\geq u$. We have
the following
	inequality 
$$\sup_{X } (v_1-u)\leq \left(2+\dfrac{2 M C^{1/n}}{\lambda^{1+\alpha}(1-2^{-\alpha})}\right)\|(v_1-u)_+\|_{L^1(\nu)}^{\frac{\alpha}{(n+1)\alpha+n}},$$
where $\nu=\mathbf{1}_{\{u<v_1\}}\theta_u^n$ and $(v_1-u)_+=\max(v_1-u,0)$.
\end{lemma}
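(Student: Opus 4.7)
The plan is to reduce the estimate on $\sup_X(v-u)$ to $\|v-u\|_{L^1(\nu)}$ via the chain
\[
\sup_X(v-u) \;\rightsquigarrow\; \capacity_{v_2}(\{u<v-\varepsilon\}) \;\rightsquigarrow\; \theta_u^n(\{u<v-\varepsilon\}) \;\rightsquigarrow\; \|v-u\|_{L^1(\nu)},
\]
followed by an optimization over $\varepsilon$ that balances the two resulting terms. The starting point is Lemma \ref{lem sup<cap}; the middle link is provided by Lemma \ref{lem t^n cap}, and the final link by a one-line Chebyshev argument.

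I first invoke Lemma \ref{lem sup<cap} with $2\varepsilon$ in place of $\varepsilon$ to get
\[
\sup_X(v-u) \leq 2\varepsilon + \frac{2MC^{1/n}}{\lambda}\bigl[\capacity_{v_2}(\{u<v-2\varepsilon\})\bigr]^{\alpha/n}.
\]
Assuming momentarily that $\varepsilon \leq \lambda$, Lemma \ref{lem t^n cap} with $s=t=\varepsilon$ yields
\[
\varepsilon^n\,\capacity_{v_2}(\{u<v-2\varepsilon\}) \leq \int_{\{u<v-\varepsilon\}}\theta_u^n,
\]
and Chebyshev's inequality applied to $\nu = \mathbf{1}_{\{u<v\}}\theta_u^n$ gives
\[
\int_{\{u<v-\varepsilon\}}\theta_u^n = \nu(\{v-u > \varepsilon\}) \leq \frac{\|v-u\|_{L^1(\nu)}}{\varepsilon}.
\]
Combining these three estimates produces
\[
\sup_X(v-u) \leq 2\varepsilon + \frac{2MC^{1/n}}{\lambda}\left(\frac{\|v-u\|_{L^1(\nu)}}{\varepsilon^{n+1}}\right)^{\alpha/n}.
\]

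The final step is the choice $\varepsilon := \|v-u\|_{L^1(\nu)}^{\alpha/((n+1)\alpha+n)}$, which is the unique exponent making the two summands on the right-hand side have the same power of $\|v-u\|_{L^1(\nu)}$; both summands then equal this common factor multiplied by $2$ and by $2MC^{1/n}/\lambda$ respectively, producing precisely the claimed bound. The one delicate point I foresee is the constraint $\varepsilon \leq \lambda$ required by Lemma \ref{lem t^n cap}: when this fails, the optimized $\varepsilon$ exceeds $\lambda$, but then the right-hand side of the target estimate already exceeds $(2MC^{1/n}/\lambda)\cdot\lambda = 2MC^{1/n}$, which dominates $M \geq \sup_X(v-u)$ once $C \geq 1$ (and $C$ can always be enlarged to satisfy this without affecting \eqref{eq0lemsup<cap}). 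Hence the claim holds trivially in that regime, and the substantive content of the proof is the elementary optimization sketched above.
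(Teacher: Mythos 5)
Your argument reproduces the paper's proof step for step: invoke Lemma~\ref{lem sup<cap} with $2\varepsilon$, use Lemma~\ref{lem t^n cap} with $s=t=\varepsilon$ to bound $\capacity_{v_2}(\{u<v-2\varepsilon\})$ by $\varepsilon^{-n}\int_{\{u<v-\varepsilon\}}\theta_u^n$, apply Chebyshev's inequality for $\nu$, and balance the two resulting terms with $\varepsilon=\|v-u\|_{L^1(\nu)}^{\alpha/((n+1)\alpha+n)}$. The exponent bookkeeping is correct and the coefficients combine exactly to $2+2MC^{1/n}/\lambda$.

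The one place where you go beyond the paper's text is your explicit treatment of the constraint $t\leq\lambda$ from Lemma~\ref{lem t^n cap}, which the paper applies silently and which does require the optimizing $\varepsilon$ to satisfy $\varepsilon\leq\lambda$. Your resolution for the regime $\varepsilon>\lambda$ --- observing that the right-hand side already exceeds $2MC^{1/n}\geq M\geq\sup_X(v-u)$ --- is correct as soon as $C\geq 2^{-n}$, but your remark that ``$C$ can always be enlarged without affecting~\eqref{eq0lemsup<cap}'' is not quite a fix of the literal statement: the constant $C$ reappears in the conclusion, so enlarging it proves a weaker estimate than the one claimed for the original $C$. A cleaner reading is that the lemma tacitly assumes $C\geq 1$ (or, equivalently, that $\|v-u\|_{L^1(\nu)}^{\alpha/((n+1)\alpha+n)}\leq\lambda$), which is automatic in the paper's applications where $\|v-u\|_{L^1(\nu)}$ is driven to zero. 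This shared caveat is minor and does not affect the main theorem; otherwise your proof and the paper's are the same.
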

\begin{proof}
	It follows from Lemma \ref{lem sup<cap} that for every $\varepsilon>0$, we have
	\begin{equation}\label{eq: lem35.1}
	\sup_X(v-u)\leq 2\varepsilon\lambda+\dfrac{2 M C^{1/n}}{1-2^{-\alpha}}[\capacity_{v_2}(u<v-2\varepsilon\lambda)]^{\alpha/n}.
    \end{equation}
	By Lemma \ref{lem t^n cap}, we obtain
		\begin{equation}\label{eq: lem35.2}
		    \begin{split}
		        (\varepsilon\lambda)^n\capacity_{v_2}(u<v-2\varepsilon\lambda)\leq \int_{\{u<v-\varepsilon\lambda\}}\theta_u^n\leq \int_{\{u<v_1-\varepsilon\}}\theta_u^n
		    \end{split}
		\end{equation} because $u\leq v_2$ so $\{u<v-\varepsilon\lambda\}\subset\{u<v_1-\varepsilon\}$. 
   Plugging \eqref{eq: lem35.2} into \eqref{eq: lem35.1}, since $v_2\geq u$ we obtain     
        \begin{align*}
		    \sup_X(v_1-u)&\leq 
2\varepsilon+\dfrac{2 M C^{1/n}}{\varepsilon^{\alpha}\lambda^{1+\alpha} (1-2^{-\alpha})}\left(\int_{\{u<v_1-\varepsilon\}}\theta_u^n\right)^{\alpha/n}
\\
&\leq 2\varepsilon+\dfrac{2 M C^{1/n}}{\varepsilon^{\alpha}\lambda^{1+\alpha} (1-2^{-\alpha}) }\left(\frac{1}{\varepsilon}\int_{\{u<v_1-\varepsilon\}}(v_1-u)_+d\nu\right)^{\alpha/n} \\
&\leq  2\varepsilon+\dfrac{2 M C^{1/n}}{\varepsilon^{\alpha(n+1)/n}\lambda^{1+\alpha}(1-2^{-\alpha}) }\|(v_1-u)_+\|_{L^1(\nu)}^{\alpha/n},
		\end{align*} 
        where the second inequality is due to Chebyshev’s inequality.
Choosing $\varepsilon:=\|(v_1-u)_+\|_{L^1(\nu)}^{\frac{\alpha}{(n+1)\alpha+n}}$ yields the desired inequality.
\end{proof}
\begin{proof}[Proof of Theorem~\ref{main}] 
	In this proof, we denote by	$M_j(a)$, for $j\geq 1$, 
    constants in $[1, \infty)$ only depending on $n, X, \omega_X, \theta, B, \beta$ and an upper bound for $\int_X e^{\frac{2(V_\theta-u)}{a\varepsilon_0}} d\mu$. We assume that $a\mapsto M_j(a)$ is decreasing for every
	$j\geq 1$, and $\lim_{a\to 0^+}M_{j}(a)=\infty$.

\smallskip
Since the cohomology class $\{\theta\}$ is big, there exists a negative 
$\theta$-psh function $\Psi_0$ with analytic
singularities such that $\theta +dd^c\Psi_0\geq\varepsilon_0\omega_X$
for some $\varepsilon_0>0$. 
It follows from~\cite{boucksom2004divisorial} that we can choose $\Psi_0$ so that ${\rm Amp}(\theta)=X\setminus {\rm Sing}(\Psi_0)$. By subtracting a positive constant, we can assume $\Psi_0\leq V_\theta$.

Fix $0<c_1\leq \frac{\varepsilon_0}{4A}$ 
and $0<\delta_1<\min\{\delta_0, \sqrt{\frac{\varepsilon_0}{4K}}\}$, where $A$, $K$, and $\delta_0$ are defined as in Lemma~\ref{lem kiselman}. For $0<\delta<\delta_1$,
we define
\begin{equation}\label{eq: constantB0}
B_0=B_0(c, \delta)=\frac{Ac+K^2\delta}{\varepsilon_0},
\end{equation}
with $c=c(\delta)$ being chosen hereafter. We set
\begin{equation}\label{eq: u c delta}
u_{c, \delta}:=B_0\Psi_0+(1-B_0)\Phi_{c, \delta},
\end{equation}
where $\Phi_{c, \delta}$ is defined by \eqref{kisleg}. It follows from Lemma~\ref{lem kiselman} that $$\theta+\ddc u_{c,\delta} \geq \varepsilon_0B_0^2\omega_X,$$ hence $u_{c,\delta}$ is $\theta$-psh. 
Recall that $a_0$ is a positive constant such that $\omega_X+a_0\ddc\psi\geq 0$. Fixing $0<a\leq a_0$, we set $$v_{a, \delta}:=u_{c, \delta}+ a\varepsilon_0B_0^2\psi.$$
\smallskip
 \textbf{Step 1: Bounding $\sup_X(v_{a,\delta}-u)$}. 
  We observe that $v_{a, \delta}$ is $\theta$-psh and $v_{a, \delta}\leq V_{\theta}+a \varepsilon_0B_0^2\psi.$ 
 Applying Lemma~\ref{lem DL17} with $\lambda=B_0$ and $v=v_{a, \delta}$, we obtain 
 \begin{equation}\label{eq1main}
 u\geq B_0v_{a, \delta}+(1-B_0)V_\theta -M_1(aB_0^2)\geq v_{a, \delta}-M_1(aB_0^2),
 \end{equation} using that $v_{a,\delta}\leq V_\theta$. 
 Since $v_{a, \delta}\leq V_{\theta}+aB_0^2 \varepsilon_0\psi,$ we also have $-\psi\leq \frac{V_{\theta}-u}{aB_0^2\varepsilon_0}$
 on $\{u<v_{a, \delta}\}$. We infer that for $s\geq 0$,
 \begin{align*}
\int_{\{u<v_{a, \delta}-s\}}\theta_{u}^n=\int_{\{u<v_{a, \delta}-s\}}e^{-\psi}d\mu 
		&\leq \int_{\{u<v_{a, \delta}-s\}}e^{\frac{V_{\theta}-u}{aB_0\varepsilon_0}}d\mu\\
&\leq \left(\int_{X}e^{\frac{2(V_{\theta}-u)}{aB_0^2\varepsilon_0}}d\mu\right)^{1/2} \left(\int_{\{u<v_{a, \delta}-s\}}d\mu\right)^{1/2}\\
&\leq M_2(aB_0^2)\left[\capacity_{V_{\theta}}(\{u<v_{a, \delta}-s\})\right]^{2},
 \end{align*}
 where the second inequality follows from H\"older's inequality and the last one follows from Theorem \ref{the exp cap}.
  Combining this with Lemma \ref{lem extend GZ12 prop 5.3} ($\lambda=B_0$, $v_1=v_{a,\delta}$, $v_2=V_\theta$), we infer that
\begin{equation}\label{eq2main}
\begin{split}
    \sup_X (v_{a, \delta}-u)&\leq \frac{M_3(aB_0^2)}{B_0^2}\|v_{a, \delta}-u\|^{\frac{1}{2n+1}}_{L^1(\nu)}\\
&\leq \frac{M_4(aB_0^2)}{B_0^2}\|(v_{a, \delta}-u)_+\|^{\frac{1}{2n+1}}_{L^1(\mu)},
\end{split}
\end{equation}
for every $s\geq 0$, where $\nu=\mathbf{1}_{\{u<v_{a, \delta}\}}\theta_u^n$.
Applying Lemma~\ref{lem DL17} with $\lambda=\frac{1}{2}$ and $v=\Psi_0+\varepsilon_0a\psi$, we obtain 
\begin{equation}\label{eq0.1main}
	u\geq \dfrac{\Psi_0+\varepsilon_0a\psi+V_{\theta}}{2}-M_5(a)\geq \Psi_0+\dfrac{\varepsilon_0a\psi}{2}-M_5(a).
\end{equation}
It follows that
\begin{align*}
v_{a, \delta}-u&=2B_0(\Psi_0+\dfrac{\varepsilon_0a\psi}{2}-u)+(1-B_0)(\Phi_{c, \delta}-u)\\
&\leq 2B_0 M_5(a)+(\Phi_{c, \delta}-u)_+.
\end{align*}
This combined with \eqref{eq2main} implies that

\begin{equation}\label{bound v a delta}
    \begin{split}
         \sup_X (v_{a, \delta}-u)&\leq \sup_X (v_{a, \delta}-u-2B_0 M_5(a))+2B_0 M_5(a)\\
    &\leq\frac{M_4(aB_0^2)}{B_0^2}\|(v_{a, \delta}-u-2B_0 M_5(a))_+\|^{\frac{1}{2n+1}}_{L^1(\mu)}+2B_0 M_5(a) \\
    &\leq\frac{M_4(aB_0^2)}{B_0^2}\|(\Phi_{c, \delta}-u)_+\|^{\frac{1}{2n+1}}_{L^1(\mu)}+2B_0 M_5(a) \\
      &\leq\frac{M_6(aB_0^2)}{B_0^2}\|(\Phi_{c, \delta}-u)_+\|^{\frac{\beta}{2n+1}}_{L^1(\omega_X^n)}+2B_0 M_5(a) \\
   &\leq\frac{M_6(aB_0^2)}{B_0^2}\|(\rho_{\delta}u-u)_+\|^{\frac{\beta}{2n+1}}_{L^1(\omega_X^n)}+2B_0 M_5(a)\\
    &\leq\frac{M_7(aB_0^2)}{B_0^2}\delta^{\frac{2\beta}{2n+1}}+2B_0 M_5(a),
    \end{split}
\end{equation}
where the last estimate holds due to Lemma \ref{Jen}.
Fix $U\Subset \textrm{Amp}(\theta)\setminus\{
\psi=-\infty\}$. 

\smallskip
\noindent\textbf{Step 2: Bounding $\sup_U(u_{c,\delta}-u)$.}
Recall that $u_{c,\delta}=v_{a,\delta}-aB_0^2\varepsilon_0\psi$. Set $m_U:=\sup_{\Bar{U}}(-\psi)$.
It follows from \eqref{bound v a delta} that
$$\sup_U(u_{c, \delta}-u)\leq aB_0^2\varepsilon_0m_U+\frac{M_7(aB_0^2)}{B_0}\delta^{\frac{2\beta}{2n+1}}+2B_0 M_5(a).$$
Denoting by $h_1$ the function $a\mapsto a\varepsilon_0m_U+2M_5(a)$ and choosing $$a\coloneqq\min\left\{h_1^{-1}\left(\frac{1}{\sqrt{B_0}}\right),\frac{a_0}{2}\right\},$$ since $B_0^2\leq B_0$ we have
$$\sup_U(u_{c, \delta}-u)\leq \sqrt{B_0}+\frac{M_7\left(h_1^{-1}\left(\frac{1}{\sqrt{B_0}}\right)B_0^2\right)}{B_0^2}\delta^{\frac{2\beta}{2n+1}}.$$
Denoting by $h_2$ the function $t\mapsto \frac{M_7\left(h_1^{-1}\left(\frac{1}{\sqrt{t}}\right)t^2\right)}{\sqrt{t^5}}$ and choosing $c=c(\delta)>0$ so that
$$h_2(B_0)=h\left( \frac{Ac+K\delta^2}{\varepsilon_0}\right)=\delta^{\frac{-2\beta}{2n+1}}.$$ 
Thus, we obtain
$$\sup_U(u_{c, \delta}-u)\leq 2\sqrt{B_0(\delta)} .$$
\noindent \textbf{Step 3: Conclusion.}
As follows from Lemma \ref{lem thmD Dem et al} below, we obtain
\begin{equation*}
    \rho_{\kappa(\delta)}u(z)-u(z)\leq 4\sqrt{B_0(\delta)}+2C_U B_0(\delta)+2K^2\delta,
\end{equation*} where $$\kappa(\delta)=\delta\exp\Big(\dfrac{-4A(C_UB_0+2\sqrt{B_0(\delta)}+2K^2\delta)}{\varepsilon_0B_0} \Big),\quad C_U=\sup_U(V_\theta-\Psi_0).$$
Replacing $\delta$ with $\kappa (\delta)$ we obtain for $\delta\leq \kappa(\delta_0)$,
\[\rho_\delta u(z)-u(z)\leq 4\sqrt{B_0(\kappa^{-1} (\delta))}+2C_U B_0(\kappa^{-1} (\delta)). \]
The conclusion thus follows.
\end{proof}

\begin{lemma}\label{lem thmD Dem et al}
	Let $c: (0, \delta_1)\rightarrow (0, c_1)$ and $B_1:\R^+\rightarrow\R^+$ be functions. Assume $\lim_{\delta\to 0^+}c(\delta)/\delta=+\infty$, so that $K^2\delta<Ac(\delta)$ for every $0<\delta<\delta_2$, where $\delta_2\in (0, \delta_1)$ is sufficiently small. 
    Let $u\in\PSH(X,\theta)$ be such that
	\begin{equation}\label{eq0 lem thmD Dem et al}
	\sup_{U}(u_{c(\delta), \delta}-u)\leq  B_1(\delta),
	\end{equation}
	for every $\delta\in(0,\delta_2)$, where $u_{c(\delta),\delta}$ is defined by \eqref{eq: u c delta} and 
    $U\Subset \textrm{Amp}(\theta)\backslash\{\psi=-\infty\}$. Then we have \begin{equation}\label{eq: delta_diff}
	\sup_U(\rho_{\kappa(\delta)}u-u)\leq 2(B_1+C_UB_0+2K^2\delta),
	\end{equation}
where
	\begin{equation*}
	    C_U=\sup_U(V_{\theta}-\Psi_0) \;\text{and}\; \kappa(\delta)=\delta\exp\Big(\dfrac{-4A(C_UB_0+B_1+2K^2\delta)}{\varepsilon_0B_0} \Big).
	\end{equation*}
\end{lemma}
\begin{proof}The argument follows from \cite[pages 642-643]{demailly2014holder}, we include the proof
for the readers’ convenience.
	By the assumption, we have
\begin{align*}
B_1\geq u_{c, \delta}(z)-u(z)&=B_0(\Psi_0(z)-u(z))\\
&+
(1-B_0)\left(\rho_{t_0}u(z)+K(t_0^2-\delta^2)+K(t_0-\delta)-c\log\left(\frac{t_0}{\delta}\right)-u(z)\right),	
\end{align*}
for every $z\in U$, where $t_0=t_0(z)\in (0, \delta)$ is a minimum point of 
$\rho_{t}u(z)+Kt^2+Kt-c\log\left(\frac{t}{\delta}\right)$. Since $V_{\theta}\geq u$
and $\rho_{t_0}u+Kt_0^2\geq u$ (see Lemma \ref{lem kiselman}), it follows that
$$B_0(\Psi_0(z)-V_{\theta}(z))
-c(1-B_0)\log\left(\frac{t_0}{\delta}\right)\leq  B_1+K\delta^2+K\delta\leq B_1+2K^2\delta,$$
for every $z\in U$. Hence, for every $z\in U$,
\begin{align*}
t_0(z)&\geq \delta\exp\bigg(\dfrac{B_0(\Psi_0(z)-V_{\theta}(z))-B_1-2K^2\delta}{c(1-B_0)} \bigg)\\
&\geq \delta\exp\Big(\dfrac{-4C_UB_0-4B_1-8K^2\delta}{A^{-1}\varepsilon_0B_0} \Big)
= \kappa(\delta),
\end{align*}
 using the facts that $B_0\varepsilon_0=Ac+K^2\delta\leq 2Ac$ and $1-B_0\geq \frac{1}{2}$. Since $t\mapsto \rho_tu+Kt^2$ is increasing, we have
\begin{align*}
    \rho_{\kappa(\delta)}u(z)-u(z)
	&\leq \rho_{t_0}u(z)+Kt_0^2-u(z)\\
		&\leq \left(\rho_{t_0}u(z)+K(t_0^2-\delta^2)+K(t_0-\delta)-c\log\left(\frac{t_0}{\delta}\right)\right)-u(z)+2K^2\delta\\
	&\leq \Phi_{c,\delta}(z)-u(z)+2K^2\delta\\
	&=\dfrac{1}{1-B_0}(u_{c, \delta}(z)-u(z))+\dfrac{B_0}{1-B_0}(u(z)-\Psi_0(z))+2K^2\delta\\
	&\leq \dfrac{B_1(\delta)}{1-B_0}+\dfrac{B_0}{1-B_0}(V_{\theta}(z)-\Psi_0(z))+2K^2\delta,
\end{align*}	
for every $z\in U$, where the last inequality holds due to \eqref{eq0 lem thmD Dem et al}. Therefore,
\begin{center}
	$\rho_{\kappa(\delta)}u(z)-u(z)\leq \dfrac{B_1}{1-B_0}+\dfrac{C_UB_0}{1-B_0}+2K^2\delta\leq 2(B_1+B_0C_U)+2K^2\delta,$
\end{center}
for every $z\in U$.
The proof is thus complete.
\end{proof}

We now deal with the special case when $\theta =\omega_X$ is a K\"ahler form, proving Corollary~\ref{coro: main}. 
Recall that $\mathcal{N}=\mathcal{N}(B,\beta,a_0,C_0,h)$ is the set of probability measures $\nu$ on $X$ such that $\nu=e^{-\psi}\mu$ with $\mu\in\mathcal{M}(B,\beta)$, $\psi\in\PSH(X, a_0\omega_X)$ and  
$$\int_Xh(-\psi)e^{-\psi}{\rm d}\mu\leq C_0,$$ 
where $a_0>0$, $ C_0>0$ are fixed constants and $h: (0, \infty)\rightarrow (0, \infty)$ is an increasing concave function with $h(\infty)=\infty$. We define the set $$\mathcal{F}=\{u\in\mathcal{E}(X,\omega_X),\,\sup_X u=0\colon (\omega_X+\ddc u)^n\in\mathcal{N}\}.$$
\begin{lemma}\label{lem: compact}
   The closure of $\mathcal{F}$ is contained in $\mathcal{E}(X,\omega_X)$. 
\end{lemma}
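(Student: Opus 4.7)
The plan is to propagate the uniform capacity-absolute-continuity of the Monge--Amp\`ere measures to $L^1$-limits, ensuring that the limiting potential retains full non-pluripolar mass.

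First I would take $u_j\in\mathcal{F}$ with $u_j\to u$ in $L^1(X)$. By Hartogs's lemma $u\in\PSH(X,\omega_X)$ with $\sup_X u=0$. Setting $\mu_j:=(\omega_X+dd^c u_j)^n\in\mathcal{N}$, Proposition~\ref{prop: unif-capa} produces a continuous function $g:[0,\infty)\to[0,\infty)$ with $g(0)=0$ such that $\mu_j(E)\leq g(\capacity_{\omega_X}(E))$ for every Borel $E$, uniformly in $j$. Combined with the classical Guedj--Zeriahi bound $\capacity_{\omega_X}(\{v\leq -s\})\leq C/s$ valid for $v\in\PSH(X,\omega_X)$ with $\sup v\leq 0$, this yields
\[\sup_j\mu_j(\{u_j\leq -s\})\leq g(C/s)\to 0\quad\text{as } s\to\infty.\]
Writing $u^s:=\max(u,-s)$ and $u_j^s:=\max(u_j,-s)$, plurifine locality together with $u_j\in\mathcal{E}(X,\omega_X)$ rewrites this bound as
\[\int_{\{u_j\leq -s\}}(\omega_X+dd^c u_j^s)^n=\mu_j(\{u_j\leq -s\})\leq g(C/s),\]
uniformly in $j$.

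The heart of the argument is to transfer this estimate to the limit. For each fixed $s$, the truncations $u_j^s\to u^s$ converge in $L^1$ through uniformly bounded $\omega_X$-psh functions, hence in capacity by Bedford--Taylor theory, so $(\omega_X+dd^c u_j^s)^n\to(\omega_X+dd^c u^s)^n$ weakly. I expect the principal obstacle to be passing the mass bound across this weak convergence, because the contact set $\{u_j\leq -s\}$ depends on $j$ and $\{u\leq -s\}$ is only a $G_\delta$ rather than closed. I would address this by fattening to the open set $U_{s,\epsilon}:=\{u^s<-s+\epsilon\}\supset\{u\leq -s\}$, testing the weakly convergent Monge--Amp\`ere measures against continuous cutoffs compactly supported in $U_{s,\epsilon}$, and using convergence in capacity of $u_j^s\to u^s$ to bound the symmetric difference with $\{u_j\leq -s\}$ up to arbitrarily small $\capacity_{\omega_X}$; then letting $\epsilon\to 0$ produces
\[\int_{\{u\leq -s\}}(\omega_X+dd^c u^s)^n\leq g(C/s).\]

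Finally, unwinding the definition of the non-pluripolar Monge--Amp\`ere product,
\[\int_X(\omega_X+dd^c u)^n=\lim_{s\to\infty}\Bigl[\Vol(\omega_X)-\int_{\{u\leq -s\}}(\omega_X+dd^c u^s)^n\Bigr]\geq \Vol(\omega_X),\]
which combined with the general inequality $\int_X(\omega_X+dd^c u)^n\leq\Vol(\omega_X)$ forces equality. Hence $u\in\mathcal{E}(X,\omega_X)$, proving that the closure of $\mathcal{F}$ is contained in $\mathcal{E}(X,\omega_X)$.
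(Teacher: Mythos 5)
Your strategy---push the uniform capacity bound of Proposition~\ref{prop: unif-capa} to the limit through the truncation characterization of $\mathcal{E}(X,\omega_X)$---is reasonable, and the bookkeeping (plurifine locality for $u_j\in\mathcal{E}(X,\omega_X)$, the bound $\capacity_{\omega_X}(\{v\leq -s\})\leq C/s$, the final identity for the non-pluripolar mass of $u$) is fine. The gap is the sentence claiming that ``the truncations $u_j^s\to u^s$ converge in $L^1$ through uniformly bounded $\omega_X$-psh functions, hence in capacity by Bedford--Taylor theory, so $(\omega_X+\ddc u_j^s)^n\to(\omega_X+\ddc u^s)^n$ weakly.'' Bedford--Taylor theory gives convergence in capacity only for \emph{monotone} sequences of locally bounded psh functions; for a general $L^1$-convergent, uniformly bounded sequence both conclusions fail. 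A local model: $u_j=\max\bigl(\tfrac1j\log|z^j-1|,-1\bigr)$ converges in $L^1$ to $\log^+|z|$, but $\{u_j<-1/2\}$ contains $j$ discs of radius comparable to $e^{-j/2}/j$ centered at the $j$-th roots of unity, a set whose capacity is bounded away from $0$ uniformly in $j$; so there is no convergence in capacity, and (in dimension $n\geq2$) the Monge--Amp\`ere measures of uniformly bounded potentials converging in $L^1$ need not converge weakly to the Monge--Amp\`ere measure of the limit. Since the transfer of the mass estimate from $u_j$ to $u$ rests entirely on this step, the argument as written does not go through; the subsequent fattening of $\{u\leq-s\}$ addresses a secondary issue but cannot repair the missing convergence.

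This is precisely the difficulty the paper's proof is designed to sidestep. There one replaces $u_j$ by $v_j=\max(u_j,u)$: since $v_j\geq u$, Hartogs' lemma combined with the quasi-continuity of $\omega_X$-psh functions \emph{does} give $v_j\to u$ in capacity; the maximum principle for non-pluripolar products shows that the measures $(\omega_X+\ddc v_j)^n$ inherit the uniform absolute continuity with respect to capacity from $(\omega_X+\ddc u_j)^n$ (this is \cite[Proposition 2.10]{dinew12-cv-capacity}), and a stability result of Dinew (\cite[Proposition 2.11]{dinew12-cv-capacity}) then forces the capacity-limit $u$ to have full Monge--Amp\`ere mass. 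If you wish to keep your more hands-on truncation argument rather than quoting these results, you must at minimum run it with $v_j=\max(u_j,u)$ in place of $u_j$, so that the convergence in capacity you need is actually available, and control $(\omega_X+\ddc v_j)^n$ by $(\omega_X+\ddc u_j)^n$ on $\{u_j>u\}$ via the maximum principle before passing to the limit.
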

\begin{proof} Let $(u_j)_{j\in\mathbb N}\subset \mathcal{F}$ be a sequence of $\omega_X$-psh functions such that $(\omega_X+\ddc u_j)^n\in\mathcal{N}$ and $\sup_X u_j=0$. By Hartogs' lemma (see, e.g., \cite[Proposition 8.4]{guedj2017degenerate}), there exists a subsequence, still denoted $(u_j)$, such that
$u_j\xrightarrow{L^1} u$ for some $u\in\PSH(X, \omega_X)$. 
    By Proposition~\ref{prop: unif-capa}, the family of non-pluripolar measures $(\omega_X+\ddc u_j)^n$ is uniformly absolutely continuous with respect to capacity. Set $v_j=\max(u_j,u)$. We see that $v_j$ converges to $u$ in capacity thanks to Hartogs' lemma and the quasi-continuity of $\omega_X$-psh functions.
    As follows from~\cite[Propositions 2.10]{dinew12-cv-capacity}, the sequence of measures $(\omega_X+\ddc v_j)^n$ is uniformly absolutely continuous with respect to capacity. Applying~\cite[Proposition 2.11]{dinew12-cv-capacity}, we conclude that $u\in\mathcal{E}(X,\omega_X)$.
    Therefore, the closure of $\mathcal{F}$ in the $L^1$ topology is contained in $\mathcal{E}(X,\omega_X)$.
\end{proof}
\begin{proof}[Proof of Corollary~\ref{coro: main}] 
    By Theorem~\ref{main}, it suffices to show that there exists a positive constant $C=C(a_0,X,\omega_X,\mu)$ such that
    \begin{equation}\label{eq: skoda}
        \int_X e^{-2u/a_0}d\mu\leq C.
    \end{equation}
    Thanks to Lemma~\ref{lem: compact}, the closure of the set $\mathcal{F}= \{u\in\mathcal{E}(X,\omega_X),\,\sup_X u=0\colon (\omega_X+\ddc u)^n\in\mathcal{N}\}$ is a compact family of functions in $\mathcal{E}(X,\omega_X)$. Since functions in $\mathcal{E}(X,\omega_X)$ have zero Lelong number at every point on $X$ by \cite{guedj2007weighted}, the uniform Skoda integrability
theorem (see, e.g.,~\cite{guedj2017degenerate}) ensures that there exists a constant $C$ depending on $a_0$ and $\overline{\mathcal{F}}$ such that the inequality \eqref{eq: skoda} holds for all $u\in\mathcal{F}$. 
\end{proof}
\begin{remark}
     Lemma~\ref{lem: compact} still holds in the more general setting where $\theta$ is merely semi-positive and big (i.e., $\int_X \theta^n > 0$). Corollary \ref{coro: main} also remains valid, but in this case only on compact subsets of $\textrm{Amp}(\theta) \setminus \{\psi = -\infty\}$.
\end{remark}
\bibliographystyle{alpha}
	\bibliography{bibfile}	

\end{document}